\newtheorem*{theorem-a}{Theorem A}%
\newtheorem*{theorem-b}{Theorem B}%
\newtheorem*{theorem-c}{Theorem C}%
\newtheorem*{theorem-d}{Theorem D}%
\newtheorem{theorem}{Theorem}
\newtheorem{prop}[theorem]{Proposition}%
\newtheorem{example}{Example}%
\newtheorem{rem}{Remark}%
\newtheorem{lemma}{Lemma}%
\newtheorem{cor}{Corollary}%
\newtheorem{definition}{Definition}%
\begin{document}

\title[Tiling spaces are covering spaces over irrational tori]{Tiling spaces are covering spaces over irrational tori}


\author[1]{\fnm{Dar\'io} \sur{Alatorre}}\email{dario@im.unam.mx}

\author[2]{\fnm{Diego} \sur{Rodr\'iguez-Guzm\'an}}\email{diego.rodriguez@academicos.udg.mx}

\affil*[1]{\orgdiv{Instituto de Matem\'aticas}, 
\orgname{Universidad Nacional Aut\'onoma de M\'exico}, 
\orgaddress{\street{\'Area de la Investigaci\'on Cient\'ifica, Circuito Escolar, Ciudad Universitaria}, \city{Ciudad de M\'exico}, \postcode{04510}, \country{M\'exico}}}

\affil[2]{\orgdiv{Departmento de Matem\'aticas}, 
\orgname{Centro Universitario de Ciencias Exactas e Ingenier\'ias, Universidad de Guadalajara}, 
\orgaddress{\street{Blvd. Marcelino Garc\'ia Barrag\'an}, \city{Guadalajara}, \postcode{44430}, \state{Jalisco}, \country{M\'exico}}}


\abstract{We study tiling spaces in the diffeological context.  
We prove some basic diffeological properties for tiling spaces and analyze two different 
fiber bundle structures of tiling spaces over irrational tori.
We use the diffeological classification of irrational tori which captures their arithmetical escence 
in order to inherit the diffeological equivalence in the context of one-dimensional tiling spaces. 

}

\keywords{Tiling Spaces, Diffeology, Fiber Bundles, Aperiodic Tilings}

\pacs[MSC Classification]{37B50, 58A40, 52C23}

\maketitle

\section{Introduction}\label{sec1}

Our motivation comes from the naive question: what is the aperiodic analog of the orbifolds that characterize the symmetries of periodic tilings of the plane? 
Conway's Magic Theorem 
\cite{conway2008} classifies the symmetries of periodic tilings of the plane through the orbifolds obtained as 
quotients $\mathcal O= \mathbb R^2/G$, where $G$ is the symmetry group of the tiling. In the crystallographic case, $G$ has a subgroup of translations isomorphic to $\mathbb Z^2$. 
Thus the aforementioned orbifolds are quotients of the torus $\mathbb T^2$ by a discrete subgroup of $G$. 
In the aperiodic one-dimensional case, however, one will find that a plausible analog of the translations subgroups $\mathbb Z$ could be 
(a subset of) $\mathbb Z + \alpha\mathbb Z$, with $\alpha$ an irrational number.
 So one should look at $\mathbb{T}_\alpha := \mathbb R/(\mathbb Z + \alpha\mathbb Z)$, which is quite a strange object, its natural topology is coarse.
  It is nevertheless non-trivial when endowed with a structure of diffeological space, 
  it is called the \emph{irrational torus} of slope $\alpha$. It was first studied by Iglesias-Zemmour and Donato in 
  \cite{PIZ-Donato} (see also \cite{PIZ85,PIZ20}). 
On the other hand, another possible analog for $\mathcal O$ could be the tiling space $\Omega$, also known as the \emph{continuous hull}
 of a tiling. 
 For instance, the tiling space of a periodic tiling is a torus obtained by identifying translations. 
 If further symmetries are to be considered then one will end up with Conway's orbifolds.  

\medskip

Our approach is to define a diffeological space structure on $\Omega$ and analyze its 
relationships with the irrational torus $\mathbb{T}_\alpha$. 
We verify some basic diffeological properties for $\Omega$ and explore two different 
fiber bundle constructions of $\Omega$ over $\mathbb{T}_\alpha$.  
The first one is a variant of the fiber bundle over the torus with Cantor set fibers first constructed by Williams \cite{williams2001} 
and later on generalized by Williams himself and Sadun \cite{Sad}. 
  This variant turns out to be more natural in the sense that there is no need to deform 
  the tiles in order to construct it, so it holds up to stronger equivalence -- we include a discussion 
  of different sorts of equivalence in geometrical and combinatorial contexts. This result is stated as
\medskip

\begin{theorem-a}
  Let $\Omega$ be a space of repetitive tilings of $\mathbb R^d$. 
  Then $\Omega$ is a covering space over $ \mathbb R^d / R_v = \mathbb{T}_v$, where $R_v$ is the return module of a vertex $v$.   
\end{theorem-a}
    
\medskip
  
The second bundle is closely related to the irrational flows on tori. We exploit the diffeological formalism in this case 
by using the classification of irrational tori in order to spot an equivalence of symbolic sequences 
called \emph{strong orbital equivalence} which is inherited by tiling spaces:
  
\medskip

\begin{theorem-b}
  Let $\Omega_\alpha$ be a tiling space of canonical projection of dimension 2 to 1 and irrational 
  slope $\alpha$. Then $\Omega_\alpha$ is an $\mathbb R$-principal bundle over 
  the irrational torus with two origins 
  $\mathbb{T}_{\alpha}^{\bullet}$. 

\end{theorem-b}

\medskip

\begin{theorem-c}
  Let $\Omega_\alpha$, $\Omega_\beta$ be tiling spaces satisfying the conditions of Theorem B. Then the 
  following conditions are equivalent:
  \item[i)] $\Omega_\alpha$ and $\Omega_\beta$ are strong orbit equivalent
  \item[ii)] Their associated sturmian spaces $X_\alpha, X_\beta$ are strong orbit equivalent 
  \item[iii)] $\mathbb T_\alpha$ and $\mathbb T_\beta$ are diffeomorphic
  \item[iv)] $\alpha$ and $\beta$ are conjugated under the modular group $\mathrm{GL}(2,\mathbb{Z})$.

\end{theorem-c}

\medskip

These results imply some other nice properties. On the one hand, the existence of such a bundle 
allows us to define and compute a
fundamental group for tiling spaces thanks to the long exact sequence
of homotopy groups -- this appears as Corollary \ref{cor:gfomega}.
On the other, they provide straightforward arguments for concluding some other known results about 
tilings belonging to the intersection of the classes of substitution and projection tilings
-- stated in Corollary \ref{cor:whithintheclass}. 




\bigskip

This kind of problems have been extensively studied from different approaches as 
topological \cite{kell-top,barge2007rigidity, sadun-julien,ormes-sadun, sadun08}, 
dynamical \cite{Robinson,alcaldecuesta,ArnouxExample,ArnouxFischerScenery,durand-mass-soe},
and algebraic \cite{brix23, connes, forrest-hunton, forr-hunt, GPS,kell-put, rieffel}, among many others. 
Our results 
 provide both an alternative 
approach to previously known results as well as a basis and new techniques 
for further research towards an understanding of the smooth structures in tiling spaces and related objects. 
It is worth mentioning that there are some recent advances about the links between diffeological 
theory and non-commutative geometry in more general settings \cite{PIZ-Dif-NCG-1,PIZ-Dif-NCG-2,verj-lupe}.  
Our work could provide a series of examples for these recent advances. 

\medskip

The article is organized as follows. In Section \ref{sec:prelim} we introduce the basic notions of both tiling spaces and diffeology,
we provide in each case several definitions and state the results that we use throughout the manuscript. 
In Section \ref{sec:diff-TS} we define and study a diffeological version of tilings spaces, 
we construct the first of the mentioned bundles $\hat p: \Omega\to \mathbb{T}_\alpha$ and investigate its \emph{structure groupoid}, 
in order to prove Theorem A. 
Section \ref{section:diff-sturm} is devoted to investigate the strong orbital equivalence between tiling spaces and to the proof of the remaining results.

\section{Preliminaries}
\label{sec:prelim}


\subsection{Background on Tiling Spaces}

\subsubsection{Tilings}

\begin{definition}
\label{def:tiling} 
{\upshape 
 A \emph{tiling} $T=\{t_i\}_{i=1}^{\infty}$ of $\mathbb R^d$ is a family of closed subsets $t_i$ of $\mathbb R^d$ called \emph{tiles} satisfying the following conditions:
\begin{itemize}
\item[i)] Each $t_i$ is a translated copy of some element of a finite set of \emph{proto-tiles} $\mathcal P =\{\tau_1,\ldots, \tau_k\}$.
\item[ii)] Each $\tau_i$ is homeomorphic to a closed $d$-disk.
\item[iii)] $\cup_{i=1}^{\infty} t_i= \mathbb R^d$.
\item[iv)] For each $i,j$, if $i\neq j$ then $\mathrm{int}(t_i)\cap \mathrm{int}(t_j)=\emptyset$.
\end{itemize}
}
\end{definition}

A tiling $T$ is \emph{periodic} if its symmetry group has an infinite cyclic subgroup and it is \emph{aperiodic} in
any other case. 
There are at least three methods for constructing aperiodic tilings: substitution, projection, and local rules. We will consider only the first two classes. 

A \emph{substitution} rule is a function from the set of proto-tiles onto the set of \emph{patches}. 
By patch we mean a finite and connected union of tiles. 
It consists of a subdivision rule for the proto-tiles followed by a stretching by a linear factor $\lambda > 1$, 
resulting in a patch of the original set of proto-tiles. By iterating a substitution rule over a finite initial configuration of tiles, arbitrarily large portions of an euclidean space can be covered and thus a tiling can be constructed. 

The \emph{projection} method, also known as the \emph{cut-and-project} method, consists in selecting a strip from a higher dimensional periodic tiling and projecting it onto a totally irrational subspace. 

Each of these methods has its own structural implications. We will explain them further in the following sections.

\subsubsection{Tiling spaces}

There is a natural action by translation of $\mathbb R^d$ on every tiling $T$ given by
$$
T+x=\{t+x: t\in T\}
$$
for $x\in \mathbb R^d$. Let 
$$
O(T)=\{T+x: x\in \mathbb R^d\}
$$
be the orbit of $T$ under this action. 
Define a metric  $d$ on $O(T)$ where two tilings $T,T'$ are $\varepsilon$-close to each other if there are vectors $x, x'$ in $\mathbb R^d$ with norm less than $\frac{\varepsilon}{2}$ such that
$$
B_{\frac{1}{\varepsilon}}(0)\cap (T+x) = B_{\frac{1}{\varepsilon}}(0)\cap (T'+x').$$
This means that two tilings are close to each other if they coincide in a big euclidean ball around the origin up to a small translation. 
\bigskip

\begin{definition}
\label{def:hull}
{\upshape  Define the {\itshape tiling space} (also known as the \emph{hull} of $T$) $\Omega_T$ of the tiling $T$ as the completion of $O(T)$ for the metric $d$ . 
}
\end{definition}

\medskip

By definition, $\Omega_T$ contains every tiling $T'$ which is locally the same as $T$, 
meaning that every patch of $T'$ may be found on $T$. 
In general, a tiling space $\Omega$ is a translation invariant set of tilings which is complete in the metric $d$.

Two tilings $T$ and $T'$ are {\itshape locally indistinguishable} $(LI)$ if every patch of $T$ is a 
patch of $T'$ and vice versa. 
 Local indistinguishability is obviously an equivalence relation whose equivalence clases $LI$
 are contained in the hull $\Omega_T$. In fact, the $LI$ class $LI(T)$ of a tiling $T$ is a dense 
 subset of $\Omega_T$. In particular, if $T'$ is $LI$ from $T$ then $\Omega_T = \Omega_{T'}$.
 Moreover, under additional assumptions, the set of $LI(T)$ classes coincides with the hull $\Omega_T$, as stated in Proposition \ref{repLI}.

\smallskip

  A substitution rule $\omega$ is called \emph{primitive} if there is an integer number $N$
  such that every proto-tile appears in the $N$-th iteration of the 
  substitution rule applied to every other proto-tile. 
  Primitive substitutions generate unique $LI$ classes, 
  thus every tiling generates the same hull and so we may speak about 
  a tiling space $\Omega_\omega$ associated to $\omega$. 

  \smallskip

A tiling $T$ has {\itshape finite local complexity (FLC)} if for every real number $R>0$ 
there is a finite number of patches, up to translation, that fit inside the ball of radius $R$; 
and $T$ is called {\itshape repetitive} if for every patch $P$ of $T$ there is a radius $R>0$ such that for every $x\in \mathbb R^d$ the ball $B_R(x)$ contains a translated copy of $P$.

\smallskip

  The following is a folklore result that may be found in \cite{baake2013aperiodic}, 
  together with the previous discussion. 
 
  \bigskip
  \begin{prop}
 \label{repLI}
 Let $T$ be a $FLC$ tiling. Then $\Omega_T$ is compact and the following conditions are equivalent:
 \begin{enumerate}
 \item $T$ is repetitive,
 \item The dynamical system $(\Omega_T, \mathbb R^d) $ is minimal (every orbit is dense),
 \item $LI(T)$ is closed,
 \item $LI(T) =  \Omega_T$. 
 \end{enumerate}
 \end{prop}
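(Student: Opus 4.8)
The plan is to first establish compactness and a \emph{patch-characterization} of the hull, and then to run a short cycle of implications connecting the four conditions, most of which become formal once the right lemma is in place. To prove that $\Omega_T$ is compact I would use that it is complete by definition, so it suffices to show it is totally bounded; this is exactly where FLC enters. Given $\varepsilon>0$, FLC guarantees only finitely many patches, up to translation, fitting inside $B_{1/\varepsilon}(0)$, and choosing representatives together with finitely many translations of size $<\varepsilon/2$ to line them up with the origin covers $\Omega_T$ by finitely many $\varepsilon$-balls in the metric $d$. Equivalently, I would argue sequentially: any sequence $(T_n)\subset\Omega_T$ has, by FLC and a diagonal argument, a subsequence whose restrictions to $B_R(0)$ stabilize up to arbitrarily small translations for every $R$, and completeness then produces a limit in $\Omega_T$.

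Next I would record the key lemma: for FLC tilings, $T'\in\overline{O(T)}$ if and only if every patch of $T'$ occurs, up to translation, in $T$. The forward direction is immediate from the definition of $d$, since a tiling close to a translate of $T$ must agree with it on a large ball; for the converse, if all patches of $T'$ occur in $T$ then for each $R$ one finds a translate of $T$ agreeing with $T'$ on $B_R(0)$, exhibiting $T'$ as a limit of the orbit. Consequently $\overline{O(T')}=\{S:\mathrm{patches}(S)\subseteq\mathrm{patches}(T')\}$ for every $T'\in\Omega_T$, and $LI(T)=\{T'\in\Omega_T:\mathrm{patches}(T')=\mathrm{patches}(T)\}$.

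With these tools the equivalences are short. For $(3)\Leftrightarrow(4)$ I use the already-stated density of $LI(T)$ in $\Omega_T$: a dense set is closed if and only if it is the whole space. For $(2)\Leftrightarrow(4)$: if the system is minimal then for any $T'\in\Omega_T$ one has $T\in\overline{O(T')}$ and $T'\in\overline{O(T)}$, so $\mathrm{patches}(T')=\mathrm{patches}(T)$ and $T'\in LI(T)$, giving $(4)$; conversely if $LI(T)=\Omega_T$ then every $T'$ shares the patch set of $T$, so by the lemma $\overline{O(T')}=\Omega_T$, which is minimality. Finally $(1)\Leftrightarrow(2)$: repetitivity implies that every $T'\in\Omega_T$ contains all patches of $T$ (a large ball of $T'$ matches a ball of $T$, which by repetitivity contains a copy of any prescribed patch), whence $T\in\overline{O(T')}$ and minimality follows; for the contrapositive, if $T$ is not repetitive there is a patch $P$ absent from balls $B_{R_n}(x_n)$ with $R_n\to\infty$, and a compactness limit of the translates $T-x_n$ yields $T'\in\Omega_T$ containing no copy of $P$, contradicting minimality since $P$ must reappear in the dense orbit of $T'$.

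The main obstacle is not any single implication but making the ``up to a small translation'' bookkeeping in the metric $d$ precise, so that ``agreeing on a large ball'' cleanly translates into ``patch inclusion''; this is exactly where FLC is indispensable, both for compactness and for the patch-characterization lemma, and all four equivalences ultimately rest on it.
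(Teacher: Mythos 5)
There is nothing in the paper to compare your argument against: the paper does not prove this proposition at all, but quotes it as a folklore result with a pointer to \cite{baake2013aperiodic} ``together with the previous discussion''. Your proposal is essentially the standard textbook proof of that folklore result, and it is correct in outline: compactness from completeness plus total boundedness (both hinging on FLC), the patch-characterization lemma $T'\in\overline{O(T)}\iff \mathrm{patches}(T')\subseteq\mathrm{patches}(T)$, and then the cycle of implications, each of which you execute correctly --- in particular the density of $LI(T)$ (which the paper states beforehand, so you may legitimately use it) makes $(3)\Leftrightarrow(4)$ immediate, and your compactness-plus-diagonal construction of a limit tiling omitting a patch $P$ is the right way to get $\neg(1)\Rightarrow\neg(2)$. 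One point you should make explicit rather than implicit: since $\Omega_T$ is \emph{defined} as the metric completion of $O(T)$, its elements are a priori abstract Cauchy classes, and every step of your argument (including the statement of your key lemma) presupposes that they can be identified with genuine tilings whose patches all occur in $T$. That identification is itself a consequence of FLC --- on each ball there are only finitely many patch types up to translation, so a Cauchy sequence of translates of $T$ eventually stabilizes, up to small translations, on every ball, and the stabilized patches assemble into a tiling --- and it deserves a sentence, as it is exactly the same bookkeeping you flag at the end. With that detail spelled out, your proof is a complete and self-contained replacement for the paper's citation; what the paper's approach buys is brevity, while yours makes the role of FLC in both compactness and the patch characterization visible.
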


 \smallskip

This fairly large class of tilings produces tiling spaces with nice structures.
 The hulls $\Omega$ of such tilings are locally homeomorphic to cartesian products of euclidean balls $D$ 
 and totally disconnected sets $K$ (Cantor sets under additional assumptions); 
 they are a rich source of examples for hyperbolic dynamics, they are known to be laminated spaces \cite{Ghys}, 
 inverse limits of branched manifolds \cite{AndPut,sadun03inverse,BBG06} and fiber bundles over tori \cite{williams2001,Sad}. 
 In the following paragraphs, we will state the last couple of theorems of this list and 
 briefly explain the intuition behind them.
 For a deeper introduction to the topology of tiling spaces see \cite{sadun08}. 


\medskip

\begin{theorem}[\cite{AndPut, sadun03inverse,BBG06}]
 
 \label{thm:APinverselimits}
 Tiling spaces are inverse limits of branched manifolds.

\end{theorem}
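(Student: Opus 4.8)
The plan is to realize $\Omega$ as the inverse limit of a sequence of finite branched manifolds built from the local patterns of the tiling, following the strategy of Anderson--Putnam and its refinement by Gähler. First I would build the approximants. For each $n$, consider the collection of $n$-collared prototiles --- each tile decorated by the full pattern of tiles meeting it within distance $n$ (its $n$-th corona). Since $T$ is $FLC$, there are only finitely many such collared prototiles up to translation. I would then form a finite CW-complex $\Gamma_n$ by taking one cell for each collared prototile and gluing two cells along a common face precisely when the two corresponding collared configurations occur adjacent across that face somewhere in the hull. The resulting object fails to be a manifold at the glued faces --- several cells may share a single face --- which is exactly the sense in which $\Gamma_n$ is a \emph{branched manifold}.

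Next I would set up the two families of maps needed for the inverse limit. On one hand, forgetting the outermost layer of collar yields continuous bonding maps $f_n \colon \Gamma_{n+1} \to \Gamma_n$. On the other hand, I would define $\pi_n \colon \Omega \to \Gamma_n$ by sending a tiling $T'$ to the point of the cell determined by the $n$-collared tile of $T'$ containing the origin, located at the position of the origin inside that tile. These are compatible, $f_n \circ \pi_{n+1} = \pi_n$, so they assemble into a single map $\pi \colon \Omega \to \varprojlim(\Gamma_n, f_n)$.

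It then remains to prove that $\pi$ is a homeomorphism. Injectivity follows because agreeing in $\Gamma_n$ for every $n$ forces two tilings to coincide on balls of arbitrarily large radius about the origin, hence to be equal in the metric $d$. For surjectivity, a thread in the inverse limit is a coherent sequence of collared patches of growing size whose compatibility, guaranteed by the bonding maps, lets them be glued into a genuine tiling of $\mathbb R^d$ lying in $\Omega$. Since $\Omega$ is compact by Proposition \ref{repLI} and an inverse limit of compact Hausdorff spaces is compact Hausdorff, the continuous bijection $\pi$ is automatically a homeomorphism. In the substitution case one can do better: after collaring so that the substitution forces the border, the substitution induces a single self-map $\gamma \colon \Gamma \to \Gamma$ of one complex and $\Omega \cong \varprojlim(\Gamma \xleftarrow{\gamma} \Gamma \xleftarrow{\gamma} \cdots)$.

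I expect the main obstacle to be the well-definedness and continuity of $\pi_n$ at tilings whose origin lies on a tile boundary or on a lower-dimensional face, where the ``tile containing the origin'' is ambiguous. This is precisely what the branched (non-manifold) gluing of $\Gamma_n$ is engineered to absorb: the face identifications must be chosen so that the position-of-origin map stays continuous as the origin crosses between adjacent tiles. Getting these identifications consistent across all local configurations --- and, in the substitution case, ensuring that the border is forced so that a single complex suffices --- is the technical heart of the argument.
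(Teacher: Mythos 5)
Your proposal is correct, but it takes a genuinely different route from the one the paper sketches. The paper (which cites this theorem rather than proving it) outlines the Anderson--Putnam substitution construction: a single complex $\Omega_0$ built by gluing prototiles along adjacencies, approximants $\Omega_n$ obtained by applying the substitution $n$ times to $\Omega_0$, bonding maps $f_n$ that forget the finer subdivision, and a one-time collaring step when the substitution fails to force the border (Remark \ref{rmk:forcetheborder}). You instead build the G\"ahler-style approximants $\Gamma_n$ from $n$-collared prototiles, with bonding maps that peel off the outermost collar layer. What your route buys: it requires no substitution rule at all, only FLC and compactness of $\Omega$, so it proves the theorem in the generality of \cite{sadun03inverse,BBG06}, which the paper only alludes to as a later generalization. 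What the paper's route buys: when a border-forcing substitution exists, all approximants are copies of one complex and all bonding maps are one self-map, so the inverse limit is stationary --- this is what makes the construction effective for computing invariants and is why it is the natural sketch in a paper about substitution and projection tilings; your closing paragraph correctly identifies this as the special case. The analytic core is the same in both: well-definedness and continuity of the position-of-origin maps where the origin sits on a tile boundary (absorbed by the branched gluing), injectivity from agreement on arbitrarily large balls, surjectivity by assembling a coherent thread of patches into a tiling of the hull, and the compactness argument upgrading the continuous bijection to a homeomorphism.
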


\smallskip

\noindent
{\bfseries Intuition behind Theorem \ref{thm:APinverselimits}.}
The iteration of a substitution rule assures that arbitrarily large euclidean balls can be covered.
This procedure 
can be encoded in the structure of an inverse limit. 
Regard each iteration of a substitution rule both as a finite pattern and a cylinder set, meaning 
that the finite pattern also represents all the tilings showing that pattern in some particular position 
(the canonical choice being around the origin). This cylinder set is identified with a branched 
manifold in the following way: consider the disjoint union of the set of prototiles and identify 
two of them by their borders if they appear next to each other somewhere in the tiling. 
Let $\Omega_0$ be the space obtained after the identification. 
It is a $CW$-complex, and it is actually a branched manifold. 
Then let $\Omega_n$ be the branched manifold obtained by applying $n$ times
the substitution rule to $\Omega_0$ (see Fig. \ref{fig:branched}). 
The theorem by Anderson and Putnam \cite{AndPut} asserts that the tiling space $\Omega$ 
is topologically conjugate to the inverse limit of ${\Omega_n, f_n}$ where the $f_n$ 
maps consist in forgetting the extra information from $\Omega_n$ to fit in $\Omega_{n-1}$. 
In other words, we can regard $\Omega$ as a subset of the infinite product of branched manifolds 
$\Pi_{n\in\mathbb N}\Omega_n$, and 
$$
\Omega \cong \lim_{\leftarrow i}(\Omega_n, f_n) = \{ (\bar x_n) \in \Pi_{n\in\mathbb N} \Omega_n : f_{n+1}(\bar x_{n+1}) = \bar x_n, \forall n \in \mathbb N) \}. 
$$

\smallskip

This result was later generalized in \cite{sadun03inverse} and \cite{BBG06} 
for general tiling spaces with FLC without the need of a substitution rule. 

\medskip

\begin{rem}
  \label{rmk:forcetheborder}
  {\upshape 
  We omited here the technicality of whether the substitution rule \emph{forces the border} — 
   which essentially means that, by applying the substitution, the patches would 
   ``grow in every direction'', not as, for example, the chair tiling substitution which would
   typically grow over the first quadrant of the plane.  
   When a substitution does not force the border, there is an additional step in the 
    proof of the previous theorem called \emph{collaring}, 
    which results in a modified complex $\tilde\Omega_0$. See \cite{sadun08} for details.  
  }
\end{rem}

\medskip

\begin{figure}
    \centering
    \includegraphics[width=3.6in]{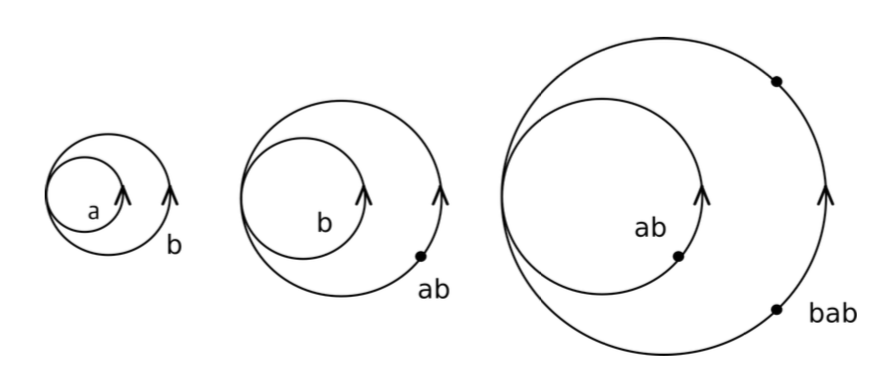}
    \caption{The branched 1-dimensional manifolds $\Omega_0$, $\Omega_1$ and $\Omega_2$ associated to 
    the subtitution rule $a\mapsto b$, $b\mapsto ab$.
    Note that this image is only for illustrative purposes but it is not precisely what is needed 
    -- we are omitting here a technicality called \emph{forcing the border}, see Remark \ref{rmk:forcetheborder}.
    }
    \label{fig:branched}
\end{figure}

\bigskip
\begin{theorem}[Sadun-Williams, 2001]
 \label{thm:SW-bundle}
 Tiling spaces are fiber bundles over tori with totally disconnected fiber. 
\end{theorem}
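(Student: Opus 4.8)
The plan is to realize $\Omega$ as the \emph{suspension} of a $\mathbb{Z}^d$-action on a totally disconnected space, since every such suspension is tautologically a fiber bundle over the torus $\mathbb{T}^d=\mathbb{R}^d/\mathbb{Z}^d$ with that space as fiber. First I would extract the local structure guaranteed by finite local complexity. By Proposition~\ref{repLI} the hull is compact, and FLC yields a local product decomposition $\Omega\cong D\times\Xi$, where $D$ is a small Euclidean $d$-disk sitting inside an $\mathbb{R}^d$-orbit and $\Xi$ is the \emph{canonical transversal}: after fixing a \emph{puncture} (a distinguished interior control point) in each prototile, $\Xi$ is the set of tilings whose puncture lies at the origin. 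The transversal is totally disconnected because its topology is generated by the cylinder sets that fix a finite patch around the origin; these are clopen and separate points, so $\Xi$ admits a basis of clopen sets and has no nondegenerate connected components (and it is a Cantor set under the additional assumptions mentioned after Proposition~\ref{repLI}).

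The central construction is to \emph{deform} the prototile shapes, continuously and compatibly with the matching of tiles, so that the punctures of every tiling in the deformed hull form a translate of the standard lattice $\mathbb{Z}^d$. Equivalently, the finitely many \emph{return vectors} joining a puncture to its nearest neighbours — of which there are only finitely many by FLC — are straightened to the standard basis $e_1,\dots,e_d$, uniformly over the hull. I would carry this out by interpolating the undeformed return vectors across each prototile and checking that the deformation respects adjacencies, so that it defines a homeomorphism onto a new tiling space $\Omega'$. That this shape change does not alter the topology of the hull is itself a statement about shape deformations of FLC tilings, which I would invoke from \cite{Sad,sadun08}; with it, $\Omega\cong\Omega'$.

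Once the punctures form $\mathbb{Z}^d$, the $d$ operations ``advance the origin to the adjacent lattice site in direction $e_i$'' are well-defined homeomorphisms of $\Xi$ and generate a $\mathbb{Z}^d$-action, and $\Omega'$ is precisely the associated suspension
$$
\Omega'\ \cong\ \bigl(\mathbb{R}^d\times\Xi\bigr)\big/\mathbb{Z}^d,
$$
where $\mathbb{Z}^d$ acts by translation on the first factor and by the return maps on the second. The projection onto $\mathbb{R}^d/\mathbb{Z}^d=\mathbb{T}^d$ is the desired bundle map: over a fundamental domain it is literally the product $(\text{chart})\times\Xi$, with transition functions given by the return homeomorphisms of $\Xi$, so it is locally trivial with totally disconnected fiber $\Xi$. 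Composing with the homeomorphism $\Omega\cong\Omega'$ finishes the argument.

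I expect the deformation step to be the main obstacle. The delicate points are proving that the return vectors can be straightened to a single constant lattice simultaneously over the entire aperiodic hull — this is where FLC is essential, since it reduces the interpolation to finitely many local patterns — and verifying that such a shape deformation is a topological conjugacy rather than merely a continuous bijection. This is precisely the technical cost that Theorem~A avoids by fibering over the quotient $\mathbb{R}^d/R_v$ by the return module instead of over a standard torus, thereby dispensing with any deformation of the tiles.
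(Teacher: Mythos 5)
Your overall architecture is the correct one, and it is at bottom the same as what the paper sketches after Theorem \ref{thm:SW-bundle} and what Sadun--Williams actually do in \cite{Sad}: deform the tile shapes so the translational data become integral, invoke the theorem that shape changes preserve the homeomorphism type of FLC hulls, and then observe that the deformed space fibers tautologically over a torus. Whether one packages the conclusion as a suspension of a $\mathbb{Z}^d$-action on a transversal (your route) or factors the bundle map through the Anderson--Putnam complex $\Omega_0$ of Theorem \ref{thm:APinverselimits} (the paper's description) is largely cosmetic.

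The gap is in your deformation target. You require that the punctures of the deformed tilings form a translate of $\mathbb{Z}^d$, equivalently that the displacement vectors from each puncture to its nearest neighbours be straightened to the standard basis $e_1,\dots,e_d$. For $d\ge 2$ this is impossible as soon as some tile has more than $2d$ neighbours, which is the typical situation (already the hexagonal tiling has $6>4$ edge-neighbours per tile): distinct neighbours have distinct displacement vectors, and there are only $2d$ vectors in $\{\pm e_1,\dots,\pm e_d\}$. More structurally, a shape deformation preserves the adjacency combinatorics, so it can never put the tiles in bijection with the sites of $\mathbb{Z}^d$ unless the combinatorics is already lattice-like. What \cite{Sad} actually proves is weaker and suffices: the shapes can be deformed so that all vertices, hence all tile-to-tile displacement vectors, lie in $\tfrac{1}{N}\mathbb{Z}^d$, so after rescaling each tiling's vertex set sits inside a single translate of $\mathbb{Z}^d$, with each tile a union of possibly several unit cells. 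The fiber is then the set of tilings whose vertices lie in $\mathbb{Z}^d$ (totally disconnected by the cylinder-set argument you gave), the bundle map records the residue of the vertex set modulo $\mathbb{Z}^d$, and your suspension formula $\Omega'\cong(\mathbb{R}^d\times\Xi)/\mathbb{Z}^d$ holds with this $\Xi$; exhibiting $\Xi$ as a genuine $\mathbb{Z}^d$-subshift requires one further MLD re-tiling into labelled unit cubes, an operation that changes the prototile set and so is not a shape deformation. Two minor points: ``return vectors'' in this paper (Definition \ref{def:returnmodule}) means recurrence vectors of a patch, not adjacency vectors, so your usage clashes with the $R_v$ of Theorem A; and you do not need the deformation to be a topological conjugacy --- a homeomorphism is all the statement requires (the deformation generically destroys conjugacy, cf.\ Example \ref{ex:fibsust}), and since the hulls are compact Hausdorff, a continuous bijection between them is automatically a homeomorphism.
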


\smallskip

\noindent
{\bfseries Intuition behind Theorem \ref{thm:SW-bundle}.}
The first versions of this theorem are due to R. Williams \cite{williams2001} for Penrose, Ammann-Beenker and the stepped plane tiling spaces. It was generalized later on by himself and L. Sadun \cite{Sad} for every tiling space with FLC. 

The idea for constructing such a bundle uses the previous result and the canonical projection $p_0$ from the inverse limit (which can be identified with) $\Omega$ onto the branched manifold $\Omega_0$, 
and then constructing a map from $\Omega_0$ which identifies each proto-tile with a torus of the corresponding dimension. 
The construction of the latter map relies on the fact that tiles can be deformed in order for their vertices to have rational coordinates.
Such a deformation works only up to homeomorphism of tiling spaces. 

Regarding the fiber, in the cases of substitution or projection tilings, it is a Cantor set, 
whereas for periodic tilings it is a finite collection of points. 
 
\medskip

One of the results of the current article is the construction of a variant of this bundle in the diffeological 
setting.


\subsubsection{Equivalence of tiling spaces}
\label{ss:equivalence}

There are several different sorts of equivalence among tiling spaces. It is rather common to 
find at least three of them in the literature: 
\begin{itemize}
  \item Homeomorphism, in the usual topological sense. This is the weaker equivalence of this 
list. Homeomorphisms map translation orbits into translation orbits (provided the tilings have FLC), but not much more than that. 
  \item Topological conjugacy or simply conjugacy is stronger than homeomorphism. This equivalence preserves most of the dynamical properties of the spaces. 
  \item Mutual local derivability is the strongest of these equivalences. It means that it is possible to obtain 
  different tilings by local manipulation. For example, Penrose's Kite and Dart tilings are MLD with 
  Penrose's rhombi. 
\end{itemize}

\bigskip
\begin{example}
\label{ex:fibsust}
  {\upshape 
  
  Topological conjugacy is a rather rigid property among tilings. To exemplify this claim we 
  reproduce the following example from \cite{sadun08}. Consider  
  a one-dimensional tiling space $\Omega$ coming from the Fibonacci substitution
  $a\mapsto b$, $b\mapsto ab$, with tile lengths $|a| = 1$ and
  $|b| = \varphi = \frac{1+\sqrt{5}}{2}$. Let $a_n, b_n$ denote the patches (words) obtained by applying the substitution $n$
   times to $a$ and $b$, respectively. 
  Let $\Omega'$ be a suspension of the Fibonacci subshift with tile lengths
   $|a'| = |b'| = (2 + \varphi)/(1 + \varphi)$, and let  
   $a'_n$ and $b_n'$ the analogs of $a_n, b_n$. 
   The spaces $\Omega$ and $\Omega'$ are known to be topologically conjugate by a mapping which 
   preserves the sequences of the tiles. This happens because 
  the differences of the lengths $|a_n|-|a'_n|$ and $|b_n|-|b_n'|$ go to zero as $n$ tends to infinity. 

  }
\end{example}

\bigskip

The first kind of equivalence between tiling spaces that we will encounter in concordance with diffeological 
equivalences is the following: 

\smallskip

\begin{definition}
\label{def:oe}
{\upshape 
    Two tiling spaces $\Omega$ and $\Omega'$ are \emph{orbit equivalent} if there exist a 
    homeomorphism $h: \Omega \to \Omega'$ mapping the orbit $O(T)$ of $T$ in $\Omega$ onto the orbit 
    $O(h(T))$ of $h(T)$ in $\Omega'$ for every tiling $T\in\Omega$.
}     
\end{definition}

\bigskip

We will see in Section \ref{section:diff-sturm} that there is a stronger equivalence between tiling spaces 
that fits into the diffeological machinery: the so-called \emph{strong orbit equivalence}.


\subsubsection{Geometrical and combinatorial aspects of tiling spaces}
\label{sss:geocomb-ts}

\begin{definition}
\label{def:CPS}
{\upshape 
A {\itshape cut-and-project set} (or CPS) is a triple $(E, H, \mathcal L)$ where $E\cong \mathbb R^d$ is the
  {\itshape physical space} of the tiling, $H$ is a locally compact abelian group called {\itshape internal space},
    a lattice $\mathcal L\subset \mathbb R^d\times H$, and the canonical projections 
    $\mathrm{pr}_1: \mathbb R^d\times H \to \mathbb R^d$ and $\mathrm{pr}_2: \mathbb R^d\times H \to H$ 
    such that $\mathrm{pr}_1|_\mathcal L$ is injective and $\mathrm{pr}_2(\mathcal L)$ is dense in $H$. 
}
\end{definition}
\bigskip
\begin{figure}[htbp] 
   \centering
   \includegraphics[width=3.5in]{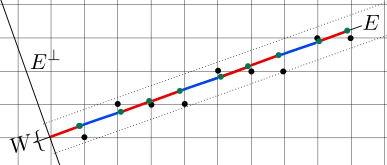} 
   \caption{Cut and project tiling from dimension 2 to 1.}
   \label{img:cps2-1}
\end{figure}

\smallskip

We will assume that $H = E^\perp = \mathbb R^n$, thus we regard $(\mathbb R^d, \mathbb R^n, \mathcal L)$ 
as a CPS from dimension $n+d$ to $d$. Given a window $W \subset H$, the CPS returns a point set: the projection onto $E$ 
of the subset $\mathcal{L}'\subset \mathcal{L}$ of lattice points whose projection $\pi_2(\mathcal{L}')$ onto $H$ lies within $W$. 
That is 
 $$\Lambda (W) = \mathrm{pr}_1 (\{y\in \mathcal L : \mathrm{pr}_2(y) \in W\}).$$
 We regard this point set as the set of vertices of a tiling. 
 The injectivity of $\pi_1|_\mathcal L$ implies the non-periodicity of the configuration.
 When the window $W$ is the projection of the unit cube of the lattice $\mathbb Z^{n+d}$ onto the internal space $\mathbb R^n$,  
 the CPS is called a {\itshape canonical projection}. 

 \medskip

 \begin{rem}
  \label{rem:cpsts}
{\upshape
The tiling space of a $(\mathbb R^d, \mathbb R^n, \mathcal L)$-CPS  is closely related to a torus of dimension $n+d$. 
We explain further the case of projection from dimension 2 to 1. 
Consider 
the line $L_\alpha=\{y=\alpha x\}$ with slope the irrational number $\alpha$. 
Thus, the $(L_\alpha,\mathbb{R},\mathbb{Z}^2)$-CPS of canonical projection generates aperiodic tilings $T(\alpha)$ 
with two tiles labelled with $0,1$ of lengths $\frac{1}{\sqrt{1+\alpha^2}},\frac{\alpha}{\sqrt{1+\alpha^2}}$, respectively. 
Observe that each point $p$ in $L_\alpha$ corresponds to an element of $O(T(\alpha))$ by the relation 
\begin{equation}
p=x\frac{(1,\alpha)}{\sqrt{1+\alpha^2}}\sim T(\alpha)-x
\end{equation} 
Note that there exists an ambiguity whenever the line $L_\alpha$ intersects a point with integer coordinates, 
say $(0,0)$ for simplicity,  
as both projections $\pi_2(0,1)$ and $\pi_2(1, 0)$ fall in the border of the canonical window $W$. 
Selecting either of them gives  us a different tiling, 
and these two tilings only differ in the tiles located on their origins, so they are at maximun distance in the 
tiling metric.  
 
Thus the tiling space $\Omega_\alpha=\Omega_{T(\alpha)}$ (Definition \ref{def:hull}) is homeomorphic to the 2-torus
\begin{equation}
\pi:\mathbb{R}^2\to\mathbb{R}^2/\mathbb{Z}^2=\mathbb{T}^2,
\end{equation} 
with a double solenoid $\mathcal S_{\alpha}'=\pi(L_\alpha)^{(0,1)}\cup\pi(L_\alpha)^{(1,0)}$ 
and every translation orbit 
is represented by the projection $\pi(y=\alpha x+\rho)$ in $\mathbb T^2$, with $\rho\in(0,1)$ . 
The double solenoid $\mathcal S_{\alpha}'$ comes from the ambiguity mentioned above.
See Section 1.4 in \cite{sadun08}, and \cite{forrest-hunton} for a more complete account.

For the line  $L_{\frac{1}{\varphi}}$ with $\varphi = \frac{1+\sqrt{5}}{2}$, 
the canonical projection tiling obtained by this method coincides with a resize of the Fibonacci tiling described 
in Example \ref{ex:fibsust}.
} 
\end{rem}

\bigskip

Given a tiling $T=\{t_i\}_{i\in\mathbb{Z}}$ of $\mathbb{R}$ we can identify its set of proto-tiles $\{\tau_j\}$  
with numbers $\{a_j\}$ so as to obtain a sequence $S=\{s_i\}_{i\in\mathbb{Z}}$ which takes values in $\{a_j\}$. 
The sequence $S$ is the \emph{combinatorial aspect} of the tiling $T$. 
This kind of sequences, endowed with the shift map 
\begin{equation}\label{function:shift}
\begin{array}{ccccc}
\sigma:&\{a_j\}^{\mathbb{Z}}&\to&\{a_j\}^{\mathbb{Z}}& \\
       & S & \mapsto&\sigma(S)&\quad\text{with}\quad \sigma(S)_i=s_{i+1},
\end{array}
\end{equation}
are studied as symbolic dynamical systems 
within the more general context of \emph{Cantor minimal systems} (see for example \cite{Putnam-CMS}). 
 
The symbolic sequences we are mainly interested in are the \emph{sturmian sequences}. In fact, 
the combinatorial aspect of the Fibonacci tiling (called in that context the Fibonacci word) is an example of
a sturmian sequence.

\medskip

\begin{definition}
\label{def:sturmian}
{\upshape 
  The \emph{sturmian sequence} of slope $\alpha$ and intercept $\rho \in [0,1)$ 
  is the sequence 
  $S=\{s_i \}_{i\in\mathbb Z}$ of 0 and 1 defined by 
  $s_n (\rho) = \lceil n\alpha + \rho \rceil  - \lceil (n-1)\alpha + \rho \rceil$. 
}
\end{definition}

\bigskip
 
The sturmian word of slope $\alpha$ and intercept $\rho$ can also be encoded as 
the cutting sequence\footnote{
The cutting sequence of a line $L_\alpha$ is constructed by writing 1  
anytime the line $L_\alpha$ crosses an integer horizontal line between the parallels lines 
$x=n-1$ and $x=n$ and 0 when it doesn't cross any integer horizontal line.} 
of the line $y=\alpha x + \rho$. 
In terms of their complexity, sturmian sequences are characterized as the words containing exactly $n+1$ different
subwords of lenght $n$. The literature on sturmian sequences is vast (see for example \cite{lothaire02}). 
We will only focus 
on the types of equivalences among sturmian sequences that turn out to be relevant in our 
context. 

\medbreak

\bigskip
\begin{definition}
\label{def:sturmianSystem}
{\upshape 
    Let $S_\alpha$ be a sturmian sequence. The \emph{sturmian system} $(X_\alpha, \sigma)$ asociated to 
    $S_\alpha = \{ s_j\}_{j\in\mathbb Z}$ 
    is defined as 
    $$
      X_\alpha := \overline{ \{ \{ s_j(\rho)\}_{j\in \mathbb Z} : \rho \in [0,1) \} } \subset \{0,1\}^\mathbb Z
    $$
    and the shift map $\sigma: X_\alpha \to X_\alpha$ (\ref{function:shift}).
}
\end{definition}

\medskip

Note the similitudes of the constructions of tiling spaces and sturmian spaces. We can regard the latter  
as ``symbolic'' or ``discretized'' tiling spaces.

\bigskip
\begin{definition}
\label{def:oecms}
{\upshape 
    Two sturmian spaces $(X,\sigma)$ and $(Y,\sigma)$ are \emph{orbit equivalent} if there exist a 
    homeomorphism $h: X \to Y$ mapping the orbit $O_{\sigma}(x)$ of $x$ under $\sigma$ in $X$ 
    onto the orbit $O_{\sigma}(h(x))$ under $\sigma$ of $h(x)$ in $Y$, for every $x\in X$.
}     
\end{definition}

\medskip
This implies that there is an unique \emph{cocycle map} $n:X\to \mathbb Z$ such that $h(\sigma(x)) = \sigma^{n(x)}(h(x))$.

\bigskip
\begin{definition}
\label{def:soecms}
{\upshape 
    
Two sturmian spaces $(X,\sigma)$ and $(Y,\sigma)$ are \emph{strong orbit equivalent}
if they are orbit equivalent and the cocycle map has at most one point of discontinuity. 
}
\end{definition}

\medskip
\begin{definition}\label{def:equivIrrat}
{\upshape 
We will say that two irrational number $\alpha$ and $\beta$ are \emph{equivalent}
 if there exists a matrix $M\in\mathrm{GL}(2,\mathbb{Z})$ such that
\begin{equation}
\alpha =\frac{c+\beta d}{ a+\beta b}=M\beta. 
\end{equation}
}
\end{definition}

\begin{lemma}\cite[Proposition 3.3]{brix23}\label{Lemma:EquivSturmian}
Let $(X_\alpha,\sigma)$ and $(X_\beta,\sigma)$ be two sturmian spaces. 
The following are equivalent:
\begin{itemize}

 \item[(1)] $\alpha\sim\beta$ (equivalence of irrationals);
 \item[(2)] $X_\alpha$ and $X_\beta$ are strong orbit equivalent.
 
\end{itemize}
\end{lemma}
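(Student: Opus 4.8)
The plan is to realize strong orbit equivalence as an isomorphism of ordered $K$-theory and then to read the arithmetic relation of Definition~\ref{def:equivIrrat} off that isomorphism. Each sturmian system $(X_\alpha,\sigma)$ is a minimal Cantor system — it is the symbolic coding of the rotation by $\alpha$, hence minimal and uniquely ergodic — so by the Giordano--Putnam--Skau classification \cite{GPS} it carries a well-defined dimension group $K^0(X_\alpha,\sigma)$ together with its distinguished order unit $[\mathbf 1]$, and two such systems are strong orbit equivalent in the sense of Definition~\ref{def:soecms} precisely when their dimension groups are isomorphic \emph{as ordered groups with order unit}. The first step, then, is to compute this invariant: using the unique invariant measure $\mu_\alpha$ (equivalently the Ostrowski/continued-fraction Bratteli--Vershik model of $X_\alpha$), the frequencies of the clopen cylinders generate the subgroup $\mathbb Z+\alpha\mathbb Z\subset\mathbb R$, so as an ordered group $K^0(X_\alpha,\sigma)\cong(\mathbb Z^2,\le_\alpha)$, where $(m,n)>0$ iff $m+n\alpha>0$, the order unit being the class of the constant function $\mathbf 1$.

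The algebraic core comes next. An isomorphism of the underlying groups $\mathbb Z^2\to\mathbb Z^2$ is a matrix $M=\left(\begin{smallmatrix}a&b\\ c&d\end{smallmatrix}\right)\in\mathrm{GL}(2,\mathbb Z)$, and demanding that it carry the cone $\le_\alpha$ onto $\le_\beta$ amounts to requiring $m+n\alpha>0\iff m(a+c\beta)+n(b+d\beta)>0$ for all $(m,n)\in\mathbb Z^2$. Positivity forces the vector $(a+c\beta,\,b+d\beta)$ to be a positive real multiple of $(1,\alpha)$, i.e.
\[
\alpha=\frac{b+d\beta}{a+c\beta};
\]
this is the Möbius action of an element of $\mathrm{GL}(2,\mathbb Z)$ on $\beta$, and after the harmless relabelling $b\leftrightarrow c$ (transposition, an automorphism of $\mathrm{GL}(2,\mathbb Z)$) it is exactly the relation $\alpha\sim\beta$ of Definition~\ref{def:equivIrrat}. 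Running the computation in both directions shows that (1) is equivalent to the existence of an isomorphism of the \emph{ordered groups} $(\mathbb Z^2,\le_\alpha)$ and $(\mathbb Z^2,\le_\beta)$: from a matrix realizing $\alpha\sim\beta$ one builds the order isomorphism, and conversely one extracts such a matrix from any order isomorphism.

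The step I expect to be the genuine obstacle — and the one that must not be glossed over, since it is exactly what separates strong orbit equivalence from the coarser rescaling (stable) relation — is the bookkeeping of the distinguished order unit, which the plain cone computation above ignores but which Definition~\ref{def:soecms} forces the isomorphism to respect. Every order isomorphism $\mathbb Z+\alpha\mathbb Z\to\mathbb Z+\beta\mathbb Z$ is a positive rescaling $x\mapsto\lambda x$ with $\lambda=a+c\beta$, so one must show that the matrix $M$ furnished by $\alpha\sim\beta$ can be \emph{chosen} so as to send the sturmian order unit of $X_\alpha$ to that of $X_\beta$; equivalently, that the single-discontinuity cocycle of Definition~\ref{def:soecms} can be arranged over the full $\mathrm{GL}(2,\mathbb Z)$-orbit. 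This is where the special structure of sturmian dimension groups must be exploited: the order unit is pinned down by the continued-fraction/Bratteli data, and one matches the two diagrams after telescoping the finitely many top levels, concentrating the discrepancy at a single path. Carrying out this order-unit reconciliation rigorously is the technical heart of the statement and is precisely what \cite[Proposition~3.3]{brix23} supplies; combined with \cite{GPS} it closes the equivalence $(1)\Leftrightarrow(2)$. The remaining verifications — minimality and unique ergodicity of $X_\alpha$, the identification of the coinvariants with $\mathbb Z+\alpha\mathbb Z$, and the translation of the cocycle condition into the order-unit requirement — are routine given this machinery.
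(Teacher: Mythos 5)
First, the ground for comparison is thin: the paper does not prove this lemma at all — it is imported verbatim from \cite[Proposition 3.3]{brix23} — so your proposal is an attempted reconstruction of the cited proof rather than a parallel to an internal argument. The non-unital half of your reconstruction is correct and is the genuine arithmetic content: an isomorphism of the ordered groups $(\mathbb Z^2,\le_\alpha)$ and $(\mathbb Z^2,\le_\beta)$ is a matrix in $\mathrm{GL}(2,\mathbb Z)$ whose M\"obius action relates $\alpha$ and $\beta$, exactly as in Definition~\ref{def:equivIrrat}.

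However, the step you yourself flag as the technical heart — choosing the matrix $M$ so that it carries the distinguished order unit of $K^0(X_\alpha,\sigma)$ to that of $K^0(X_\beta,\sigma)$ — is not a difficulty to be overcome by careful bookkeeping; it is impossible in general, and this is where your route breaks. Since the infinitesimal subgroup is trivial and the system uniquely ergodic, $K^0(X_\alpha,\sigma)$ is order-isomorphic to $\mathbb Z+\alpha\mathbb Z\subset\mathbb R$ with the induced archimedean total order and unit $1$. By H\"older's theorem every order isomorphism $\mathbb Z+\alpha\mathbb Z\to\mathbb Z+\beta\mathbb Z$ is $x\mapsto\lambda x$ for some $\lambda>0$, so unit-preservation forces $\lambda=1$ and hence $\mathbb Z+\alpha\mathbb Z=\mathbb Z+\beta\mathbb Z$, i.e.\ $\beta\equiv\pm\alpha\ (\mathrm{mod}\ \mathbb Z)$ — a relation strictly finer than $\mathrm{GL}(2,\mathbb Z)$-equivalence. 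Concretely, $\alpha=\sqrt2-1$ and $\beta=\alpha/(1+\alpha)=1-\tfrac{1}{\sqrt2}$ have the same continued-fraction tail, yet $\mathbb Z+\alpha\mathbb Z\neq\mathbb Z+\beta\mathbb Z$, so no unit-preserving order isomorphism exists. Telescoping the Bratteli diagram cannot rescue this, since telescoping preserves the unital isomorphism class of the dimension group exactly. Consequently the criterion of \cite{GPS} \emph{with} order unit does not deliver $(1)\Leftrightarrow(2)$ along the lines you sketch: your computation without the unit characterizes a coarser relation (flow equivalence in the sense of Boyle--Handelman, i.e.\ stable rather than unital isomorphism on the $C^*$-side, cf.\ \cite{rieffel}), which is in fact the relation matching the rest of the paper, where diffeomorphisms of $\mathbb T_\alpha$ lift to $F(x)=\lambda x+\mu$ with no constraint $\lambda=1$ (Lemma~\ref{lemma:IrracionalToriEquivalence}). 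So either the notion of strong orbit equivalence operative in \cite{brix23} must be read differently from Definition~\ref{def:soecms} fed through \cite{GPS}, or the order unit must be discarded from the invariant; in either case the closing step of your proposal, as described, fails — see also \cite{durand-mass-soe} for the orbit-equivalence analysis of sturmian systems, which is sensitive to precisely this distinction.
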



\subsection{Background on Diffeology}

Diffeology is a generalization of differential geometry that includes manifolds and orbifolds, among other spaces. 
One of its attributes is that it is possible to deal with pathological spaces such as quotients by 
dense subspaces -- the case we are interested in. 

All of the content of this section may be found fully developed in \cite{PIZ13}. We will adopt most of its terminology. By \emph{euclidean domain} it should be understood an open subset of some euclidean space $\mathbb R^n$, and by  \emph{parametrization} any function from an euclidean domain to a set $X$. 

\subsubsection{Diffeological Spaces}
\label{ss:diff-sp}

\begin{definition}

    {\upshape Given a set $X$, a \emph{diffeology} $\mathcal D$ for $X$ is a set of  parametrizations $P:U\to X$, where $U$ is an open subset of some $\mathbb R^d$, subject to the following axioms: 
    \begin{itemize}
        \item[D1.] Every constant parametrization of every euclidean domain on $X$ belongs to $\mathcal D$.
        \item[D2.] If $P:U\to X$ is a parametrization such that every $u\in U$ has a neighborhood $V\subset U$ such that $P|_V\in \mathcal D$, then $P\in \mathcal D$. 
        \item[D3.] If $P:U\to X$ belongs to $\mathcal D$ and $f:V\to U$ is a smooth map between euclidean domains then $P\circ f \in \mathcal D$. 
    \end{itemize}
    A diffeological space is a pair $(X, \mathcal D)$ where $\mathcal D$ is a diffeology for the set $X$. The elements of $\mathcal D$ are called {\itshape plots}.
    }
    
\end{definition}  

\bigskip

\begin{example} 
\label{ex:standard-diff}
 
  {\upshape 

Every $\mathbb R^d$ is a diffeological space whose structure is inherited from that of a differentiable manifold. Consider the set of parametrizations 
    $$ \mathcal{C}^\infty_* = \bigcup_{n\in\mathbb{N}} \{P:U\rightarrow \mathbb{R}^d : U \subset \mathbb{R}^n \text{ and }  P \text{ is } C^\infty \}_n.
    $$
    The pair $(\mathbb{R}^d, \mathcal{C}^\infty_*)$ is a diffeological space and $\mathcal{C}^\infty_*$ is called \emph{the standard  diffeology} for $\mathbb{R}^d$.  In the same fashion, every differentiable manifold is a diffeological space. 
    
  }
\end{example}

\medskip

A function $f:X\to Y$ between diffeological spaces is {\itshape smooth} if, for every plot $P:U \to X$ of $X$, the composition $f \circ P$ is a plot of $Y$. 

\medskip

In a similar manner as in topology, diffeologies can be ordered and there is a minimum and a maximum 
with respect to this order. 
The minimum  is the the \emph{fine} or \emph{discrete} diffeology whose plots are all locally constant; 
and the maximum is the \emph{coarse} or \emph{indiscrete} diffeology and it is defined as the diffeology that contains every parametrization.  

\smallskip

Diffeological spaces and smooth maps between them form a category which we shall denote by $\bf{Diff}$. 
It is a well-behaved category in the sense that it is closed under taking quotients, subobjects, limits, colimits and spaces of functions $C^\infty(\star, \star)$, where $C^\infty$ denotes the set of smooth maps between spaces in the diffeological sense. 
 Natural structures are given to them via the basic constructions {\itshape pull-back} and {\itshape push-forward} 
 of diffeologies from a space $(X,\mathcal D)$ and maps $f: Y\to X$ and $g:X \to Z$, respectively. 
 The pull-back diffeology  $f^*(\mathcal D)$ along $f$ (resp. push-forward $g_*(\mathcal D)$ through $g$) is defined as
  the coarsest diffeology on $Y$ (resp. the finest diffeology on $Z$) such that $f$ (resp. $g$) is smooth. 
  The plots for these diffeologies are specified as follows: 

\begin{itemize}  

  \item[i.] A parametrization $Q: V\to Y$ is a plot in $f^*(\mathcal D)$ if and only if $f \circ Q$  is a plot for $X$. As is shown in the following diagram:  

 $$
\xymatrix
{
V \ar[d]_Q \ar@{-->}[rd]^{f\circ Q} &  \\
Y \ar[r] _f & X 
} 
$$
 
 \item[ii.] A parametrization $R: W\to Z$ is a plot in $g_*(\mathcal D)$ if and only if, in every point $w$ of $W$, either $R$ is locally constant or there is a neighborhood $U$ of $w$ and a plot $P: U\to X$ such that $R|_U = g\circ P$. In the latter case we shall say that $P$ \emph{locally lifts} $R$ along $g$. As shown in the following diagram:  

 \end{itemize} 

$$
\xymatrix
{
 U \ar@{-->}[rd] \ar[d]_P \ar@{^(->}[r] & W\ar[d]^R  \\
 X \ar[r]_g & Z
}
$$

The monomorphisms of $\bf{Diff}$ are called {\itshape inductions}: injective maps $i$ where the diffeology of the domain coincides with the pull-back along $i$ of the diffeology of the range;
and the epimorphisms {\itshape subductions}: surjective maps $p$ where the diffeology of the range coincides with the push-forward through $p$ of the diffeology of the domain.
The isomorphisms of $\bf{Diff}$ are called \emph{diffeomorphisms}. They are smooth invertible maps whose inverse function is again smooth. 

The \emph{subspace diffeology} is defined as the pull-back diffeology along the inclusion, the \emph{quotient diffeology} as the push-forward diffeology through the canonical projection, and diffeologies for products and coproducts (= diffeological sum) as the finest (resp. the coarsest) diffeology such that all the projections (resp. inclusions) are smooth. 

The \emph{functional diffeology} for the set of smooth maps $C^\infty(X,X')$ between two diffeological spaces may be defined 
as the coarsest diffeology such that the evaluation map 
$$
ev: C^\infty(X,X')\times X \to X' \text{ \hspace{0.3cm} given by \hspace{0.3cm} } ev(f,x) = f(x)$$
is smooth. 

\medskip
For a smooth surjection $\pi: X \to B$ between diffeological spaces $X,B$ it is possible to define the
 \emph{structure groupoid} $K_\pi$ of the surjection $\pi$ as the groupoid whose objects are $\mathrm{Obj}(K_\pi)=B$ 
 and its arrows are $\mathrm{Mor}(K_\pi)=\bigcup_{b,b'\in B} C^\infty (X_b, X_{b'})$, where  $X_b$ denotes the fiber  $\pi^{-1}(b)$. 
 See \cite[Postulate 8.2]{PIZ13}.
 
 Let $s$ and $t$ denote the usual \emph{source} and \emph{target} maps of an arrow $\varphi \in \mathrm{Mor}(K_\pi)$
 given by $t(\varphi) = \pi(rank(\varphi))$ and $s(\varphi) = \pi(dom(\varphi))$. A diffeology for the groupoid $K_\pi$ is defined as the coarsest diffeology such that 
 the evaluation map $ev: X_s\to X$, given by $ev(\varphi,x)=\varphi(x)$, is smooth, 
 where 
 $$X_s = \{(\varphi, x)\in \mathrm{Mor}(K_\pi)\times X: x \in dom(\varphi)\}$$ 
 denotes the total space of the pull-back of $\pi$ along the map $s$.
  
  The groupoid structure comes with a smooth map called the \emph{characteristic map}:
\begin{equation}
  \label{fb:charctMap}
  \begin{array}{cccc}
    \chi:& \mathrm{Mor}(K_\pi) &\to & B\times B\\
    \quad   &\varphi &\mapsto & (s(\varphi), t(\varphi)) 
  \end{array}
\end{equation}
which, as we shall see in the next section, encodes the properties of the fibers of $\pi$. 

\bigskip

\begin{rem}
\label{rem:d-top&loc}
  {\upshape 
  The diffeological structure over a space naturally induces a topology, the so-called $D$-topology, 
  whose open sets $A\subset X$ are precisely the subsets whose pre-images $P^{-1}(A)$ under every plot $P$ are open subsets 
  of some $\mathbb R^n$. The $D$-topology over a diffeological space is the coarsest topology such that every plot is continuous.
  
  The existence of the $D$-topology allows us to talk about local diffeological properties, defined in $D$-neighborhoods. 
  A map $f: X \to X'$ between diffeological spaces is said to be \emph{locally smooth} in $x\in X$ if there is a 
  $D$-open subset $A$ containing $x$ such that the map $f|_A$ is smooth. That is, if for every plot $P$ of $X$ 
  the parametrization $f\circ P|_{P^{-1}(A)}$ is a plot of $X'$. The map $f: X \to X'$ is smooth if and only 
  if it is locally smooth in every point of $X$. See \cite[Postulate 2.3]{PIZ13}. 
  }
\end{rem}

\subsubsection{Diffeological Fiber Bundles}
\label{SS:DFB}

\begin{definition}
  \label{def:fibrant}
    {\upshape 
    Let $\pi: X \to B$ be a smooth surjection between diffeological spaces. 
    Then $\pi$ is a \emph{diffeological fibration} if and only if the characteristic map $\chi$
    of the structure groupoid $K_\pi$ (see Eq. \ref{fb:charctMap}) is a subduction. 
    In such case, $K_\pi$ is called \emph{fibrating}.
    }
\end{definition}

\medskip

A morphism between surjections $\pi: X \to B$ and $\pi': X' \to B'$  is a pair of smooth maps $(\Phi, \phi)$ 
making the following diagram commute.
$$
\xymatrix
{
X \ar[d]_-\pi \ar[r]^\Phi & X' \ar[d]^-{\pi'}   \\
B \ar[r]^\phi & B' }
$$
Whenever $\Phi$ and  $\phi$ are diffeomorphisms we say that $\pi$ and $\pi'$ are \emph{equivalent}. 
In particular, $\pi$ is \emph{trivial} with diffeological fiber $F$ if it is equivalent to the 
projection $\mathrm{pr}_1:B\times F\to B$. 
We say that $\pi$ is \emph{locally trivial} 
with fiber $F$ if there exists a $D$-open cover $\{U_i\}_{i\in\Lambda}$ of $B$ 
such that the restrictions $\pi|_{U_i}:\pi^{-1}(U_i)\to U_i$ are trivial surjections with fiber $F$.  

\medskip

When $\pi:X\to B$ is a diffeological fibration the surjectivity of $\chi$ (\ref{fb:charctMap}) implies that the 
preimages $X_b=\pi^{-1}(b)$ are all diffeomorphic: they are the fibers $F=X_b$.

\medskip

\begin{lemma}\cite[Postulate 8.9]{PIZ13}\label{lemma:lt.a.plots}
Let $X$ and $B$ diffeological spaces. A smooth map $\pi:X\to B$ is a fibration if and only if there exists a diffeological
 space $F$ such that the pullback $P^{*}(X)$ of $\pi$ along any plot $P:U\to B$ of $B$ is locally trivial with fiber $F$, where 
$$P^{*}(X):=\{(u,x)\in U\times X : P(u)=\pi(x)\}.$$
Thus $\pi$ is \emph{locally trivial along the plots} of $B$.
\end{lemma}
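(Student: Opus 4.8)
The plan is to reconcile the two descriptions of a fibration: the defining one, via the characteristic map $\chi$ of the structure groupoid $K_\pi$ being a subduction, and the asserted local triviality along the plots of $B$. First I would record the operational meaning of ``$\chi$ is a subduction''. By the characterization of subductions through the push-forward diffeology, $\chi$ is a subduction if and only if every plot $Q:U\to B\times B$ lifts locally along $\chi$; and since a plot of $B\times B$ is precisely a pair $(P_1,P_2)$ of plots of $B$, such a local lift is a parametrization $\phi:V\to\mathrm{Mor}(K_\pi)$ on a neighborhood $V$ of a given point with $s(\phi(r))=P_1(r)$ and $t(\phi(r))=P_2(r)$; equivalently, $\phi(r)$ is a diffeomorphism $X_{P_1(r)}\to X_{P_2(r)}$ depending smoothly on $r$. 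The bridge between ``$\phi$ is a plot of $K_\pi$'' and ``the fiberwise maps vary smoothly'' is the defining diffeology of $K_\pi$: by construction it is the coarsest one making $ev$ smooth, so $\phi$ is a plot exactly when the associated map $(r,x)\mapsto\phi(r)(x)$ is smooth on the pullback domain $\{(r,x):\pi(x)=P_1(r)\}$.

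For the implication that a fibration is locally trivial along the plots, I would fix a plot $P:U\to B$ and a point $u_0\in U$, set $b_0=P(u_0)$, and take $F=X_{b_0}$. Applying the subduction property of $\chi$ to the plot $(c_{b_0},P):U\to B\times B$, where $c_{b_0}$ is the constant plot at $b_0$, yields a local lift $\phi:V\to\mathrm{Mor}(K_\pi)$ with $\phi(r):X_{b_0}\to X_{P(r)}$. I would then propose the trivialization
$$\Psi:V\times F\to P^{*}(X)|_{V},\qquad \Psi(r,y)=(r,\phi(r)(y)),$$
which lands in $P^{*}(X)$ since $\pi(\phi(r)(y))=t(\phi(r))=P(r)$, and is smooth because $\phi$ is a plot of $K_\pi$. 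Its inverse $(r,x)\mapsto(r,\phi(r)^{-1}(x))$ is smooth because inversion in the structure groupoid is smooth, so that $r\mapsto\phi(r)^{-1}$ is again a plot. Hence $\Psi$ is a diffeomorphism and $P^{*}(X)$ is locally trivial with fiber $F$; the subduction property also forces all fibers $X_b$ to be diffeomorphic, so $F$ is independent of the choices.

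For the converse, I would assume local triviality along the plots with a common fiber $F$ and verify that $\chi$ is a subduction. Given any plot $(P_1,P_2):U\to B\times B$ and a point $u_0$, I would shrink to a neighborhood $V$ on which both $P_1^{*}(X)$ and $P_2^{*}(X)$ admit trivializations $\Phi_i:V\times F\to P_i^{*}(X)|_{V}$. Writing $\Phi_{i,r}:F\to X_{P_i(r)}$ for the fiberwise components, I would set $\phi(r):=\Phi_{2,r}\circ\Phi_{1,r}^{-1}:X_{P_1(r)}\to X_{P_2(r)}$, so that $\chi(\phi(r))=(P_1(r),P_2(r))$, and then confirm that $\phi$ is a plot of $K_\pi$ by checking that $(r,x)\mapsto\Phi_{2,r}(\Phi_{1,r}^{-1}(x))$ is smooth, which follows from smoothness of the $\Phi_i$ and their inverses. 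This exhibits the required local lift of $(P_1,P_2)$, whence $\chi$ is a subduction and $\pi$ is a fibration.

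The step I expect to be the main obstacle is the careful bookkeeping of the diffeology on the structure groupoid $K_\pi$: translating between a smooth pointwise family of fiber diffeomorphisms and a genuine plot of $\mathrm{Mor}(K_\pi)$, and in particular justifying that inversion $\varphi\mapsto\varphi^{-1}$ is smooth so that the inverse trivializations are themselves plots. Once this correspondence --- which is exactly the content of the definition of the groupoid diffeology through the evaluation map --- is set up cleanly, the remaining verifications are formal.
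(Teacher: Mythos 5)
The paper contains no proof of this lemma to compare against: it is imported verbatim, with citation, as Postulate 8.9 of \cite{PIZ13}, and the statement is used later as a black box. Your reconstruction is correct and is essentially the canonical argument from that reference: in the forward direction you lift the plot $(c_{b_0},P)$ of $B\times B$ through the subduction $\chi$ and use the lift $\phi$ to build the trivialization $(r,y)\mapsto(r,\phi(r)(y))$ of $P^{*}(X)$, and in the converse you compose two fiberwise trivializations to get the local lift $\phi(r)=\Phi_{2,r}\circ\Phi_{1,r}^{-1}$ of an arbitrary plot of $B\times B$. The two bookkeeping points you leave somewhat implicit are genuinely needed but are covered by your construction: a subduction is by definition surjective, which in the converse direction follows from evaluating your lift at a single point (and is also what makes the fiber $F$ independent of choices in the forward direction); and a plot of $\mathrm{Mor}(K_\pi)$ must make both the evaluation and the inverse evaluation smooth, which your symmetric use of $\Phi_1$, $\Phi_2$ and of smooth groupoid inversion supplies.
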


\bigskip

 A group $G$ is a {\itshape diffeological group} if the multiplication and the inverse mappings are smooth:
$$
\begin{array}{cccc}
\mathrm{mul}&=&[(g,g')\to g\cdot g']&\in C^\infty (G\times G,G)\\
 \mathrm{inv}&=&[g\to g^{-1}]&\in C^\infty (G,G).
\end{array}
$$
We say that the group $G$ acts smoothly on a diffeological space $X$ if the group homomorphism $h: G \to {\bf Diff}(X)$ is smooth. The \emph{action map} of $G$ over $X$ is given by:
$$
\begin{array}{ccccc}
\mathcal{A}_{X}^{G}:& X \times G&\to&X\times X & \\
 &(x,g)&\mapsto&(x,g_X(x))&\quad\text{with}\quad g_X=h(g).
\end{array}
$$
\begin{prop}\cite[Postulate 8.11]{PIZ13} Consider the projection $\pi:X\to X/G$. If $\mathcal{A}_{X}^{G}$ is an induction then $\pi$ 
  is a diffeological fibration with fiber $G$ and the action of $G$ on $X$ is called \emph{principal}. Moreover, any equivalent
   fiber bundle $\pi':X\to Q$ to $\pi$ is a $G$-\emph{principal fiber bundle} or $G$-\emph{principal fibration}. 
\end{prop}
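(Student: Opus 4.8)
The plan is to extract from the induction hypothesis on $\mathcal{A}_X^G$ a smooth \emph{translation} map and then feed it into the structure groupoid $K_\pi$ in order to verify that its characteristic map $\chi$ is a subduction, which is exactly the defining condition (Definition \ref{def:fibrant}) for $\pi$ to be a diffeological fibration. First I would record what the hypothesis buys us. An induction is in particular injective, and injectivity of $\mathcal{A}_X^G$ forces the action to be free: if $g_X(x)=g'_X(x)$ then $(x,g_X(x))=(x,g'_X(x))$, whence $g=g'$. The image of $\mathcal{A}_X^G$ is precisely the fibre product $X\times_{X/G}X=\{(x,y):\pi(x)=\pi(y)\}$, since $y$ lies in the orbit of $x$ if and only if $\pi(x)=\pi(y)$. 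Being an induction onto this image, $\mathcal{A}_X^G$ is a diffeomorphism $X\times G \cong X\times_{X/G}X$; composing its inverse with the projection to $G$ yields a smooth translation map $\tau:X\times_{X/G}X\to G$, well defined by freeness and characterised by $\tau(x,y)_X(x)=y$.

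Next I would identify the fibres with $G$. Fixing $b\in X/G$ and $x_0\in X_b$, the orbit map $o_{x_0}:g\mapsto g_X(x_0)$ is a bijection $G\to X_b$ by freeness; it is smooth because the action is smooth, and its inverse $y\mapsto\tau(x_0,y)$ is smooth as a restriction of $\tau$. Hence $o_{x_0}$ is a diffeomorphism for the subspace diffeology on $X_b$, and every fibre is diffeomorphic to $G$.

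The core step is to prove that $\chi:\mathrm{Mor}(K_\pi)\to (X/G)\times(X/G)$ is a subduction. Surjectivity is immediate: given $(b,b')$, choose $x\in X_b$ and $x'\in X_{b'}$ and transport by the $G$-equivariant morphism $\varphi_{x,x'}:=o_{x'}\circ o_{x}^{-1}:X_b\to X_{b'}$, which satisfies $\varphi_{x,x'}(z)=\tau(x,z)_X(x')$ and $\chi(\varphi_{x,x'})=(b,b')$. For the push-forward condition I would start from an arbitrary plot $P=(P_1,P_2):W\to (X/G)\times(X/G)$; since $\pi$ is a subduction, each $P_i$ locally lifts to a plot $Q_i$ with $\pi\circ Q_i=P_i$ near each point of $W$. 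The assignment $w\mapsto\varphi_{Q_1(w),Q_2(w)}$ is then a candidate local lift of $P$ through $\chi$, and one tests that it is a plot of $K_\pi$ against the evaluation map: over the pulled-back total space $X_s$ the composite $ev$ is $(w,z)\mapsto\tau(Q_1(w),z)_X(Q_2(w))$, which is smooth because $\tau$, the $Q_i$ and the action are smooth. Since the groupoid diffeology is the coarsest one making $ev$ smooth, this is exactly what is required, so $P$ lifts locally and $\chi$ is a subduction.

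By Definition \ref{def:fibrant} the groupoid $K_\pi$ is then fibrating and $\pi$ is a diffeological fibration; combined with the fibre identification its fibre is $G$, and the action is principal. Finally, if $\pi':X\to Q$ is equivalent to $\pi$ via a pair of diffeomorphisms $(\Phi,\phi)$, conjugating the $G$-action and the translation map by $\Phi$ transports the principal structure, so $\pi'$ is a $G$-principal fibration. The step I expect to be most delicate is the verification that $w\mapsto\varphi_{Q_1(w),Q_2(w)}$ is genuinely a plot of $\mathrm{Mor}(K_\pi)$: the groupoid diffeology is given only implicitly as the coarsest one making $ev$ smooth, and it is precisely the smoothness of $\tau$ — equivalently, the induction hypothesis — that makes this lifting work.
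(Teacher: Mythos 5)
The paper does not actually prove this proposition: it is quoted from \cite[Postulate 8.11]{PIZ13} as background material, so there is no internal proof to compare yours against. Your argument is correct and is essentially the standard one behind that postulate: injectivity of $\mathcal{A}_X^G$ gives freeness, the image of $\mathcal{A}_X^G$ is the fibered product $X\times_{X/G}X$, inverting the induction yields the smooth translation map $\tau$, orbit maps identify every fiber with $G$, and local lifts of plots of $(X/G)\times(X/G)$ through $\pi$ produce the translation morphisms $\varphi_{Q_1(w),Q_2(w)}$ that witness that $\chi$ is a subduction. It is worth noting that this is precisely the technique the paper itself deploys later, in Section \ref{sec:diff-TS}, to prove that $\tilde p:\Omega\to\mathbb{T}_v$ is a diffeological fibration: there the morphisms $\varphi_{z_s,z_t}(T)=T-z_s+z_t$ play exactly the role of your $\varphi_{x,x'}$, and the plot-lifting verification is the same. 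One small point to tighten: with the paper's (and PIZ's) description of the structure groupoid diffeology, a parametrization $w\mapsto\varphi_w$ into $\mathrm{Mor}(K_\pi)$ is a plot only if, besides $\chi\circ P$ and the forward evaluation $P_s$, the \emph{inverse} evaluation $P_t:(w,z)\mapsto\varphi_w^{-1}(z)$ is also smooth; you only check the forward one. In your construction this is immediate by symmetry, since $\varphi_{Q_1(w),Q_2(w)}^{-1}=\varphi_{Q_2(w),Q_1(w)}$, i.e. $(w,z)\mapsto\tau(Q_2(w),z)_X(Q_1(w))$, which is smooth for the same reasons; with that remark added, your proof is complete.
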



\subsubsection{The irrational torus}
\label{SS:irratorus}
The covering $\mathrm{pr}:\mathbb{R}^2\to\mathbb{T}^2=\mathbb{R}^{2}/\mathbb{Z}^2$ is clearly a $\mathbb{Z}^2$-principal fibration 
and the torus $\mathbb{T}^2$ is a diffeological group. Then any line $L_\alpha$ of slope $\alpha$ in $\mathbb{R}^2$ 
through the origin is projected onto a 
diffeological one-parameter subgroup $\mathcal{S}_\alpha$ of 
$\mathbb{T}^2$. Then it induces a group action whose action map $\mathcal{A}_{\mathbb{T}^2}^{\mathcal{S}_\alpha}$ is an induction. 
When $\alpha$ is an irrational number we get that $\mathcal{S}_\alpha$ is isomorphic to $\mathbb{R}$ and it is called the \emph {irrational solenoid} of slope $\alpha$.
 Thus, the projection
\begin{equation}\label{projection:solenoid}
 \pi_\alpha:\mathbb{T}^2 \to \mathbb{T}^2/\mathcal{S}_\alpha=\mathbb{T}_\alpha
\end{equation}  
is a $\mathbb{R}$-principal fiber bundle over the \emph{irrational torus} $\mathbb{T}_\alpha$ known as the Kronecker flow of slope $\alpha$ in $\mathbb{T}^2$.
  
\medskip

Consider the subgroup  $\mathbb{Z} + \alpha \mathbb{Z}$ in $\mathbb R$ with $\alpha$ irrational. 
It is a diffeological group and its action map $\mathcal{A}_{\mathbb{R}}^{\mathbb{Z} + \alpha \mathbb{Z}}$ is an induction. 
Then, the quotient   
\begin{equation}\label{projection:cover}
\mathrm{pr}_\alpha:\mathbb{R}\to \mathbb R / \mathbb Z + \alpha \mathbb Z=\mathbb{T}_\alpha
\end{equation}
is a diffeological space and it is diffeomorphic to $\mathbb{T}^2/\mathcal{S}_\alpha$ (see \cite[Proposition 3]{PIZ20}).  
The density of $\mathbb Z + \alpha \mathbb Z$ in $\mathbb R$ implies that $\pi_\alpha$ is locally trivial 
along the plots of $\mathbb{T}_\alpha$ (see Lemma \ref{lemma:lt.a.plots}) but it is not locally trivial.

\medskip

For two irrational numbers $\alpha,\beta$, the following lemma from \cite{PIZ20} 
dictates the conditions for 
 the irrational tori $\mathbb{T}_\alpha, \mathbb{T}_\beta$ to be diffeomorphic.

\medskip

\begin{lemma}\label{lemma:IrracionalToriEquivalence}
    
    Let $\alpha$ and $\beta$ be irrational numbers. The following assertions hold:
   \begin{itemize}
   \item[i)]There is a non-trivial smooth map $f \in C^\infty(\mathbb{T}_\alpha, \mathbb{T}_\beta)$ if and only if there are integers $a,b,c,d$ such that 
    $$ \beta = \frac{a\alpha - c }{d - b\alpha},\quad\text{with}\quad ad-bc\neq0. $$ 
   \item[ii)] $f$ lifts through the projections $\mathrm{pr}_\alpha,\mathrm{pr}_\beta$ (Eq. \ref{projection:cover}) to an affine map $F(x)=\lambda x+\mu$ with $\lambda=a+b\beta$ and $\lambda\alpha=c+d\beta$. 
   \item[iii)] $f$ is a diffeomorphism if and only if $ad-cb = \pm 1$.
   \item[iv)] If (iii) hold, the diffeomorphism $f$ induces an equivalence between the \mbox{$\mathbb{R}$-principal} fibrations $\pi_\alpha$ and $\pi_\beta$.
   \end{itemize}
 \end{lemma}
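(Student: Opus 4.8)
The plan is to reduce everything to the study of lifts through the defining projections $\mathrm{pr}_\alpha,\mathrm{pr}_\beta$ (Eq.~\ref{projection:cover}) and then to prove a rigidity statement: every non-trivial smooth map is covered by an \emph{affine} map of $\mathbb{R}$. First I would note that, since $\mathrm{pr}_\alpha$ is a plot of $\mathbb{T}_\alpha$ and $f$ is smooth, the composite $f\circ\mathrm{pr}_\alpha\colon\mathbb{R}\to\mathbb{T}_\beta$ is a plot of $\mathbb{T}_\beta$. By the very definition of the quotient (push-forward) diffeology on $\mathbb{T}_\beta$, this plot \emph{locally} lifts through $\mathrm{pr}_\beta$: around each point there is a smooth local lift $F_V\colon V\to\mathbb{R}$ with $\mathrm{pr}_\beta\circ F_V=f\circ\mathrm{pr}_\alpha|_V$. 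On connected overlaps two such local lifts differ by a constant element of the fiber $\mathbb{Z}+\beta\mathbb{Z}$; since $\mathbb{R}$ is connected and simply connected these locally constant transition data form a trivial cocycle, so the local lifts glue into a global smooth lift $F\colon\mathbb{R}\to\mathbb{R}$ with $\mathrm{pr}_\beta\circ F=f\circ\mathrm{pr}_\alpha$.

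The crux is the rigidity argument forcing $F$ to be affine. Equivariance gives $F(x+\gamma)-F(x)\in\mathbb{Z}+\beta\mathbb{Z}$ for every $\gamma\in\mathbb{Z}+\alpha\mathbb{Z}$ and every $x$. For fixed $\gamma$ the map $x\mapsto F(x+\gamma)-F(x)$ is smooth, hence continuous, with image inside the countable — therefore totally disconnected — set $\mathbb{Z}+\beta\mathbb{Z}\subset\mathbb{R}$; as $\mathbb{R}$ is connected this image is a single point, so $F(x+\gamma)-F(x)$ equals a constant $c_\gamma$ independent of $x$. Differentiating yields $F'(x+\gamma)=F'(x)$ for all $\gamma\in\mathbb{Z}+\alpha\mathbb{Z}$; since this subgroup is dense and $F'$ is continuous, $F'$ is constant and $F(x)=\lambda x+\mu$. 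Taking $\gamma=1$ and $\gamma=\alpha$ gives $\lambda=c_1\in\mathbb{Z}+\beta\mathbb{Z}$ and $\lambda\alpha=c_\alpha\in\mathbb{Z}+\beta\mathbb{Z}$, i.e.\ $\lambda=a+b\beta$ and $\lambda\alpha=c+d\beta$ for integers $a,b,c,d$; eliminating $\lambda$ produces $\beta=(a\alpha-c)/(d-b\alpha)$, which proves (i) and (ii). For the converse of (i) one starts from such integers, sets $F(x)=(a+b\beta)x$, and checks directly that it is equivariant and hence descends to a smooth $f$. Non-triviality is equivalent to $\lambda\neq0$, which — given these relations and the irrationality of $\alpha$ and $\beta$ — is equivalent to $ad-bc\neq0$.

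For (iii), the affine $F$ with $\lambda\neq0$ is always a diffeomorphism of $\mathbb{R}$, so $f$ is a diffeomorphism exactly when its inverse $F^{-1}(y)=\lambda^{-1}(y-\mu)$ is also equivariant, i.e.\ when multiplication by $\lambda$ is an \emph{isomorphism} $\mathbb{Z}+\alpha\mathbb{Z}\xrightarrow{\ \sim\ }\mathbb{Z}+\beta\mathbb{Z}$ of rank-two $\mathbb{Z}$-modules. In the bases $\{1,\alpha\}$ and $\{1,\beta\}$ this map is the integer matrix $\left(\begin{smallmatrix}a&c\\b&d\end{smallmatrix}\right)$, invertible over $\mathbb{Z}$ precisely when $ad-bc=\pm1$. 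Finally, for (iv) I would realize the associated $M\in\mathrm{GL}(2,\mathbb{Z})$ as a diffeomorphism $\widehat M$ of $\mathbb{T}^2$; the slope relation $\beta=(a\alpha-c)/(d-b\alpha)$ says exactly that $\widehat M$ carries the solenoid $\mathcal{S}_\alpha$ onto $\mathcal{S}_\beta$, so $\widehat M$ descends to $f$ on the quotients and intertwines the two Kronecker $\mathbb{R}$-flows up to the reparametrization by the nonzero scalar $\lambda$. Since rescaling $\mathbb{R}$ by $\lambda$ is a group automorphism, the pair $(\widehat M,f)$ is the desired equivalence of the $\mathbb{R}$-principal fibrations $\pi_\alpha$ and $\pi_\beta$.

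I expect the main obstacle to be the global lifting step combined with the affineness argument: because $\mathrm{pr}_\beta$ is a quotient by a \emph{dense} subgroup, the $D$-topology of $\mathbb{T}_\beta$ is indiscrete and ordinary covering-space theory is unavailable. One must work purely with local smooth lifts and their locally constant $(\mathbb{Z}+\beta\mathbb{Z})$-valued transition functions, and then exploit the total disconnectedness of $\mathbb{Z}+\beta\mathbb{Z}$ inside $\mathbb{R}$ to upgrade ``continuous and discrete-valued'' to ``constant.'' Everything downstream is elementary linear algebra over $\mathbb{Z}$.
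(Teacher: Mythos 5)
Your proof is correct and follows essentially the same route as the paper's: lift $f$ through $\mathrm{pr}_\alpha,\mathrm{pr}_\beta$ via the quotient diffeology, force the lift to be affine by differentiating the equivariance relation and combining density of $\mathbb{Z}+\alpha\mathbb{Z}$ with continuity of $F'$, read off $\lambda(\mathbb{Z}+\alpha\mathbb{Z})\subset\mathbb{Z}+\beta\mathbb{Z}$ to obtain the arithmetic relation and the $\mathbb{Z}$-module isomorphism criterion $ad-bc=\pm1$, and realize the corresponding $\mathrm{GL}(2,\mathbb{Z})$ matrix on $\mathbb{T}^2$ for the equivalence of the principal fibrations. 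The only differences are cosmetic: the paper keeps a single local lift and extends it using that $\mathrm{pr}_\alpha(U)=\mathbb{T}_\alpha$ by density, whereas you glue local lifts into a global one, and your connectedness argument for the constancy of $F(x+\gamma)-F(x)$ makes explicit a step the paper leaves implicit.
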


\begin{proof}
  
  Let $f:\mathbb{T}_\alpha \to \mathbb{T}_\beta$ be a smooth map. Then for every $x_0\in\mathbb{R}$ there exists an open interval $V$ centered at $x_0$ and a smooth map $F:V\subset \mathbb R \to \mathbb R$ 
  such that 
  \begin{equation}\label{eq:definitionLocalLift}
  \mathrm{pr}_\beta \circ F= f\circ\mathrm{pr}_\alpha. 
  \end{equation} 
  Consider $\epsilon>0$ and an interval $U\subset V$ centered at $x_0$ such that for every $x\in U$ and for all $n,m \in \mathbb Z$ 
  satisfying that $n+m\alpha\in (-\epsilon,\epsilon)$ then $x+n+m\alpha \in V$. By Equation \ref{eq:definitionLocalLift} we have that  
  \begin{equation}
    \label{eq:toroalfatorobeta}
    F(x+n+m\alpha) = F(x) + n'+ \beta m',
  \end{equation}
  for $x\in U$ and $n+m\alpha$ fixed. The irrationality of $\alpha, \beta$ implies that $n'$ and $m'$ are unique. 
  But $F$ is smooth, then its derivative with respect to $x$ at $x_0$ gives us 
  \begin{equation}
    F'(x_0+n+m\alpha) = F'(x_0).
  \end{equation}
  But $\mathbb Z + \alpha \mathbb Z \cap (-\epsilon, \epsilon)$ is dense in $(-\epsilon, \epsilon)$ 
  and $F'$ is continuous, then $F'(x) = F'(x_0)$  for every $x \in U$. 
  Thus $F|_U$ is affine, that is, there exist $\lambda, \mu \in \mathbb R$
  such that $F(x) = \lambda x +\mu $ for every $x$ in $U$. As $\mathbb Z + \alpha \mathbb Z$ is dense in $\mathbb R$, 
  $\mathrm{pr}_\alpha(U) = \mathbb{T}_\alpha$. Therefore the map $F$ completely defines $f$ and extends to the covers. 

  Using Equation \ref{eq:toroalfatorobeta} we get that
  \begin{equation}
    \lambda(x+n+\alpha m)+\mu = \lambda x + \mu + n' +\beta m',  
  \end{equation}
  for every $n+\alpha m \in (-\epsilon,\epsilon)$. Thus, we get that
  $\lambda (\mathbb Z + \alpha \mathbb Z) \subset \mathbb Z + \beta \mathbb Z$. Therefore $\lambda = a+\beta b, \lambda\alpha = c+\beta d$,
  and $$\alpha =\frac{c+\beta d}{ a+\beta b}.$$ 

  Considering $\mathbb Z + \alpha \mathbb Z, \mathbb Z + \beta \mathbb Z$ as $\mathbb Z$ 
  modules, the smooth map $F$ is an isomorphism if and only if $ad-cb= \pm 1$. 
 
  For the quotients $\mathbb T^2 / \mathcal S_\alpha,\mathbb T^2 / \mathcal S_\beta$, 
  it is easy to see that the matrix element 
  \begin{equation}
    M=\begin{pmatrix}
      d & -b \\
      -c & a 
      \end{pmatrix}
  \end{equation}
  is a linear transformation of $\mathbb R^2$ that fixes the $\mathbb Z^2$ lattice and maps the line 
  $L_\alpha$ onto $L_\beta$. And whenever $\det(M)=\pm1$, the linear transformation $M$ induces an equivalence between 
  $\mathbb R$-principal fiber bundles  
  $$
  \xymatrix
  {
  \mathbb T^2 \ar[d]_-{\pi_\alpha} \ar[r]^{\Phi_M} & \mathbb T^2 \ar[d]^-{\pi_\beta}   \\
  \mathbb{T}_\alpha \ar[r]^\phi & \mathbb{T}_\beta }
  $$
  For a more complete treatment we refer the reader to \cite[Propositions 4-7]{PIZ20}.

\end{proof}

In particular, the orbit of $L_\alpha$ under the action of the group $\mathrm{GL}(2,\mathbb{Z})$ is a family of 
irrational tori diffeomorphic to $\mathbb{T}_\alpha$. 
The group of connected components of $C^\infty(\mathbb{T}_\alpha, \mathbb{T}_\alpha)$ 
is a subgroup of $\mathrm{GL}(2,\mathbb{Z})$ 
and it is different from the group $\{\mathrm{id},\mathrm{-id}\}$ if and only if $\alpha$ is a quadratic irrational 
(see Proposition 3.3 in \cite{PIZ85}).  

\bigskip

Another property worth mentioning is that the exact sequence of homotopy
 for fiber bundles also holds in the diffeological context, 
 (see \cite[Postulate 8.21]{PIZ13}). 
In our case, it gives us the following exact sequence:  
\begin{equation}\label{exactHomotopi:kronecker}
\cdots \longrightarrow \pi_1(\mathbb R) \longrightarrow \pi_1(\mathbb T^2) 
\longrightarrow \pi_1(\mathbb T_\alpha ) \longrightarrow \pi_0(\mathbb R) \longrightarrow \cdots.
\end{equation}
The diffeological space $\mathbb R$ is connected and simply connected, 
then the fundamental groups of $\mathbb T^2$ and $\mathbb T_\alpha$ are isomorphic.


\section{Diffeological Tiling Spaces}
\label{sec:diff-TS}

In the present section we introduce a diffeological structure for the tiling spaces $\Omega$. 
For now, we let $\Omega$ be any space of tilings of $\mathbb R^d$.

\subsection{The Diffeological Space $\Omega$}
\label{SS:DSomega}

\medskip
\begin{definition}
\label{def:diff-omega}
{\upshape
 Define a diffeology  $\mathcal D$  for $\Omega$ as the finest diffeology such that 
 the translation action of $\mathbb R^d\times \Omega\to \Omega$ given by $(x,T)\mapsto T+x$ is smooth.   
}
\end{definition}

\medskip

Observe that, for any diffeology $\mathcal D'$ on $\Omega$ such that the action of $\mathbb R^d$ 
by translation is smooth, the \emph{orbit maps}
$\hat T: \mathbb R^d \to \Omega$ given by $\hat T(x) = T+x$ are smooth for every $T$. 
This means that any such diffeology 
$\mathcal D'$ contains every orbit map $\hat T:\mathbb R^d \to \Omega$ as a plot, and thus 
the diffeology generated by the family of orbit maps is contained in $\mathcal D'$. 
Let
\begin{equation}
  \label{eq:family-orbit-maps}
\mathcal F = \{ \hat T : \mathbb R^d \to \Omega :  T\in \Omega \text{ and } \hat T(x) = T + x \}. 
\end{equation}
and $\mathcal D_1 = \langle F \rangle$. We have that the diffeology $\mathcal D$ from 
Definition \ref{def:diff-omega} contains $\mathcal D_1$. But $\mathcal D$ is minimal 
by definition of generating family \cite[Postulate 1.68]{PIZ13}. Thus $\mathcal D = \mathcal D_1$. 





\medskip

By definition of generating family, the plots of 
$\mathcal D = \langle F\rangle$ are specified 
as the parametrizations $P: U \to \Omega$ such that for every $u\in U$ there exists an open neighborhood 
$V\subset U$ of $u$ such that either $P|_V$ is a constant parametrization 
or there exists an element 
$\hat T: \mathbb R^d \to \Omega$ of $\mathcal F$ and a smooth map $Q:V\to \mathbb R^d$ 
such that $P|_V = \hat T \circ Q$. 
As shown in the following diagram. 
$$
\xymatrix{
V \ar@{-->}[r]^Q \ar@{^(->}[d]  &  \mathbb R^d \ar[d]^{\hat T}  \\
U \ar[r] ^P & \Omega   }
$$


\medskip

With regard to equivalence we note that diffeomorphisms 
of tiling spaces map orbits onto orbits, and vice versa, an \emph{orbit equivalence} between tiling spaces is a diffeomorphism.  

\medskip
\begin{prop}
\label{prop:conjugadossyssdifeomorfos}

Let $(\Omega, \rho)$ and $(\Omega', \rho')$ be two $d$-dimensional tiling spaces endowed with their 
respective $\mathbb R^d$-actions $\rho$ and $\rho'$ by translation. 
Then, they are orbital equivalent if and only if they are diffeomorphic. 
\end{prop}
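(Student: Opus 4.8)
The plan is to work entirely with the explicit generating description of $\mathcal D = \langle \mathcal F\rangle$ recorded just after Definition \ref{def:diff-omega}: a parametrization $P\colon U\to\Omega$ is a plot precisely when, locally around every point, it is either constant or of the form $\hat T\circ Q$ with $\hat T\in\mathcal F$ an orbit map and $Q$ smooth. Since orbit maps generate the diffeology and axiom D3 permits precomposition with smooth maps, checking that a map $h\colon\Omega\to\Omega'$ is smooth reduces to checking that $h\circ\hat T$ is a plot of $\Omega'$ for every orbit map $\hat T$ (the locally constant case being automatic). I would therefore prove both implications by analysing the single composite $h\circ\hat T\colon\mathbb R^d\to\Omega'$.

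For the implication \emph{diffeomorphic $\Rightarrow$ orbit equivalent}, let $h$ be a diffeomorphism. Then $h\circ\hat T$ is a plot, so $\mathbb R^d$ is covered by open sets on each of which $h\circ\hat T$ is constant or factors as $\widehat{T'}\circ Q$ through a single orbit map; in either case the image of that open set lies in one orbit of $\Omega'$. Because distinct orbits are disjoint, the sets $(h\circ\hat T)^{-1}(O(T'))$ are open, pairwise disjoint, and cover the connected space $\mathbb R^d$; hence exactly one is nonempty and $h(O(T))\subseteq O(T')$ for a single orbit. Running the same argument for the diffeomorphism $h^{-1}$ forces $h(O(T))=O(T')$, so $h$ carries orbits bijectively onto orbits. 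As a smooth map $h$ is continuous for the $D$-topology, and likewise for $h^{-1}$; combined with orbit-preservation this yields the orbit equivalence of Definition \ref{def:oe}.

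For the converse, let $h$ be an orbit equivalence. Since $h$ maps $O(T)$ onto $O(h(T))$, for each $x$ there is a unique translation vector $g(x)$ with $h(T+x)=h(T)+g(x)$ (uniqueness uses triviality of the stabilizer; for periodic tilings one first passes to the quotient of $\mathbb R^d$ by the period lattice). This defines $g\colon\mathbb R^d\to\mathbb R^d$ with $g(0)=0$ and gives the factorization $h\circ\hat T=\widehat{h(T)}\circ g$. Because $\widehat{h(T)}$ is injective, the only way $h\circ\hat T$ can be a plot is that $g$ itself be smooth; thus the whole problem collapses to the regularity of the reparametrization map $g$.

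The main obstacle is exactly this last point: orbit equivalence a priori supplies only a continuous $g$, whereas a plot requires $g$ to be smooth. To close this gap I would exploit the local product structure of repetitive FLC hulls noted in Section \ref{sec1} (and underpinned by Proposition \ref{repLI}), under which $\Omega$ is locally $D\times K$ with $D$ a Euclidean disc traversed by the translation action and $K$ a totally disconnected transversal. An orbit equivalence respects this flow-box decomposition, so on the Euclidean factor it induces a continuous injection of one disc onto another commuting with translations, and the rigidity of the free $\mathbb R^d$-action forces this induced map to be affine; consequently $g$ is locally affine, hence smooth. Feeding this back shows $h\circ\hat T$ is a plot for every $T$, so $h$ is smooth, and the symmetric argument for $h^{-1}$ makes $h$ a diffeomorphism. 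I would also flag one structural subtlety to reconcile along the way: the $D$-topology generated by the orbit maps is strictly finer than the hull metric topology (each orbit is $D$-clopen), so the word ``homeomorphism'' in Definition \ref{def:oe} must be matched carefully against $D$-continuity when identifying the two notions of equivalence.
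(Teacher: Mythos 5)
Your first half (diffeomorphic $\Rightarrow$ orbit equivalent) is essentially the paper's own argument in different clothing: the paper joins $T$ to $T+x$ by the smooth path $s\mapsto T+sx$ and uses the fact that a plot locally takes values in a single orbit, while you run the same fact through an open-cover/connectedness argument on $\mathbb R^d$; both are fine, and both leave the same loose end you flag at the close, namely that a diffeomorphism for $\mathcal D$ is a homeomorphism for the $D$-topology, not for the hull metric topology required by Definition \ref{def:oe} (the paper ignores this point entirely).

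The genuine problem is the converse, and you have put your finger on exactly the step the paper elides: its proof writes $\varphi(T+f(u))=\varphi(T)+y$ ``for some $y\in\mathbb R^d$'' and immediately declares $\varphi\circ P$ a plot, without checking that $u\mapsto y(u)$ is smooth. Your reduction of the whole question to smoothness of the displacement map $g$ (via $h\circ\hat T=\widehat{h(T)}\circ g$ and injectivity of $\widehat{h(T)}$) is correct, but your proposed repair fails. The local disc maps induced by an orbit equivalence do \emph{not} commute with translations --- that property is equivariance, i.e.\ topological conjugacy, strictly stronger than Definition \ref{def:oe} --- and a free $\mathbb R^d$-action has no rigidity forcing orbit reparametrizations to be affine: orbit equivalence permits arbitrary continuous ``time changes''. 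Concretely, send each tiling of the Fibonacci space with tile lengths $1,\varphi$ to the combinatorially identical tiling with both tile lengths $1$, placing the origin at the proportional position within its tile (a shape deformation in the sense of \cite{sadun-julien}; compare Example \ref{ex:fibsust}, where the lengths are specially chosen so that this becomes a conjugacy). This map is a homeomorphism carrying orbits onto orbits, yet the induced map $g_T$ on each orbit is piecewise affine with derivative jumping between $1$ and $1/\varphi$ at tile boundaries, hence not smooth; by your own factorization argument $h\circ\hat T$ is then not a plot, so this orbit equivalence is not smooth for $\mathcal D$. So the statement that both you and the paper actually argue --- that \emph{every} orbit equivalence is a diffeomorphism --- is false, and no flow-box rigidity can close the gap; at best the proposition could survive in the purely existential reading, which would require exhibiting a different, smooth, orbit-preserving bijection, something neither your proposal nor the paper's proof attempts.
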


\begin{proof}
    Let $\varphi:\Omega \to \Omega'$ an orbital equivalence between tiling spaces and $P:U \to \Omega$ a plot. Up to pre-composing with a smooth map $f$, 
    $P$ is locally a translation. This means that for every $u \in U$ there exists an open neighborhood $V\subset U$ of $u$, 
    a tiling $T \in \Omega$ and a smooth map $f: V\to \mathbb R^d$ such that $P|_V = \hat T \circ f$. Hence
    $$
    \varphi  \circ P  (u)  =  \varphi \circ \hat T \circ f (u) = \varphi ( T + f(u)) = \varphi(T) + y =  \hat{\varphi(T)} ( y ), 
    $$
    for some $y\in \mathbb R^d$. The third equality is given by hypothesis and $\hat {\varphi(T)}$ is an element of 
    the generating family $\mathcal F'$ for the diffeology of $\Omega'$. 
    Thus $\varphi \circ P$ is a plot of $\Omega'$ and then $\varphi$ is smooth. 
    The same argument works for proving the smoothness of $\varphi^{-1}$.
    Hence $\varphi$ is a diffeomorphism.

Conversely, let $\varphi : \Omega \to \Omega'$ be a diffeomorphism. 
Let $T\in \Omega$ and $x \in \mathbb R^d$. Consider the path $\gamma: \mathbb R \to \Omega$ 
given by $\gamma(s) = (1-s) T + s( T+x)$. This path joins $T$ to $T+x$. 
Then, as $\varphi$ is smooth, $\varphi \circ \gamma$ is a path in $\Omega'$ such that 
$\varphi\circ \gamma (0) = \varphi (T)$ and $\varphi\circ \gamma (1) = \varphi (T+x)$. 
Thus $\varphi(T)$ and $\varphi(T+x)$ lie in the same path component of $\Omega'$, so $\varphi(T+x)$ 
must be a translate of $\varphi(T)$. Hence $\varphi$ maps orbits in $\Omega$ onto orbits 
in $\Omega'$. 

\end{proof}

\bigskip

\smallskip

\begin{prop}
\label{prop:sust-is-diffeo}
If $\omega$ is a primitive substitution rule for $\Omega$ then the map $\omega: \Omega \to \Omega$ is smooth. 
Moreover, if $\Omega$ contains at least one non-periodic tiling then $\omega$ is a diffeomorphism. 
\end{prop}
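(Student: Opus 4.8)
The plan is to exploit the commutation relation between the substitution $\omega$ and the translation action, together with the explicit description of the plots of $\mathcal D=\langle\mathcal F\rangle$ recorded after Definition \ref{def:diff-omega}. Recall that $\omega$ is built by subdividing each tile according to the substitution rule and then stretching by the linear factor $\lambda>1$. Subdivision commutes with translation, while stretching sends $T+x$ to $\lambda T+\lambda x$; hence for every tiling $T$ and every $x\in\mathbb R^d$ one has
\begin{equation}
\label{eq:omega-commute}
\omega(T+x)=\omega(T)+\lambda x .
\end{equation}
Equivalently, $\omega$ conjugates translation by $x$ to translation by $\lambda x$. This is the only geometric input needed; everything else is formal manipulation with the generating family $\mathcal F$ of orbit maps from Equation \eqref{eq:family-orbit-maps}.

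First I would prove smoothness. Let $P:U\to\Omega$ be a plot. By the description of $\mathcal D$, every $u\in U$ has a neighborhood $V\subset U$ on which either $P|_V$ is constant or $P|_V=\hat T\circ Q$ for some $\hat T\in\mathcal F$ and some smooth $Q:V\to\mathbb R^d$. In the constant case $\omega\circ P|_V$ is constant, hence a plot. In the second case, using \eqref{eq:omega-commute},
\begin{equation}
\omega\circ P|_V(u)=\omega\bigl(T+Q(u)\bigr)=\omega(T)+\lambda Q(u)=\widehat{\omega(T)}\bigl(\lambda Q(u)\bigr),
\end{equation}
so that $\omega\circ P|_V=\widehat{\omega(T)}\circ(\lambda Q)$, where $\lambda Q:V\to\mathbb R^d$ is smooth and $\widehat{\omega(T)}\in\mathcal F$. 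Thus $\omega\circ P$ is locally of the required form around every point, hence a plot of $\Omega$. As $P$ was arbitrary, $\omega$ is smooth.

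For the second assertion, suppose $\Omega$ contains a non-periodic tiling; by primitivity and minimality (Proposition \ref{repLI}) the system is then aperiodic. Surjectivity of $\omega$ is automatic from the commutation relation: by \eqref{eq:omega-commute} the compact set $\omega(\Omega)$ satisfies $\omega(\Omega)+\lambda x=\omega(\Omega)$ for all $x\in\mathbb R^d$, so it is a non-empty closed translation-invariant subset and therefore equals $\Omega$ by minimality. Injectivity is precisely the recognizability (unique decomposition into level-one supertiles) of aperiodic primitive substitutions, which I would cite as the classical Moss\'e-type theorem. Finally, inverting \eqref{eq:omega-commute} with $y=\lambda^{-1}x$ gives $\omega^{-1}(T+x)=\omega^{-1}(T)+\lambda^{-1}x$, and repeating the plot computation of the previous paragraph verbatim with $\omega^{-1}$ and $\lambda^{-1}$ in place of $\omega$ and $\lambda$ shows that $\omega^{-1}$ sends plots to plots. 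Hence $\omega^{-1}$ is smooth and $\omega$ is a diffeomorphism.

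The smoothness half is essentially automatic once \eqref{eq:omega-commute} is in hand, and the diffeological inverse is dispatched by the same one-line computation. The genuine content, and the step I expect to be the main obstacle to state cleanly, is the bijectivity of $\omega$: this is not a diffeological fact but the recognizability property of aperiodic primitive substitutions. I would be careful to pinpoint where the non-periodicity hypothesis enters, namely that it is exactly what excludes the periodic, torus-type hulls on which $\omega$ can fail to be injective, so that smoothness holds for every primitive $\omega$ while invertibility requires the aperiodic case.
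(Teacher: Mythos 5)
Your proposal is correct and follows essentially the same route as the paper: the smoothness argument via the commutation relation $\omega(T+x)=\omega(T)+\lambda x$ together with the local factorization $P|_V=\hat T\circ Q$ of plots is identical to the paper's, as is the remark that $\omega^{-1}$ (satisfying $\omega^{-1}(T+x)=\omega^{-1}(T)+\lambda^{-1}x$) is handled by repeating the same computation. The only difference is one of granularity: where you split invertibility into surjectivity (via minimality) and injectivity (via Moss\'e-type recognizability), the paper simply cites Theorem 1.3 of Sadun's book, which packages that same classical fact.
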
 

\begin{proof}
Let $P:U\to \Omega$ be a plot for $\Omega$. We must show that $\omega\circ P$ is also a plot. 
By definition, for every point $u\in U$ there is a 
neighborhood $V\subset U$, an element $\hat T:  \mathbb R^d \to \Omega$ 
of the generating family of the diffeology $\mathcal D$ and a smooth map $Q:V\to W$  such that $P|_V = \hat T\circ Q$ (assuming $P$ non constant, as if it were the result would follow).
$$
\xymatrix{
U \ar[r]^P  & \Omega \ar[r]^\omega & \Omega \\
V \ar@{^(->}[u]  \ar[r] ^Q & W \ar[u]^{\hat T} \ar[ur]_{\hat {\omega(T)}} }
$$
Then for every $v\in V$ and some $T\in \Omega$ we have that
$$
\omega \circ P (v) = \omega \circ \hat T \circ Q (v) = \omega \circ \hat T(x) = \omega (T+x) =  \omega(T) + \lambda \cdot x = 
\hat {\omega(T) }(\lambda\cdot x), 
$$
where  $x= Q(v)$ and $\lambda$ is the expansion factor of $\omega$. 
The map $\hat {\omega(T) }$ is an element of $\mathcal F$ because $\omega(T) \in \Omega$. It follows that every $u$ in $U$ has a neighborhood $V$ such that
 $\omega\circ P|_V = \hat {\omega(T) } \circ Q$. Then $\omega \circ P$ is a plot for $\Omega$ and $\omega$
 is smooth.
 
\smallskip

The fact that $\omega$ is invertible whenever $\Omega$ 
contains a non-periodic tiling follows from Theorem 1.3 in \cite{sadun08}, and the smoothness 
of $\omega^{-1}$ is verified in the same way as before.  
\end{proof}

\medskip

Recall that the topological space $\Omega$ is locally homeomorphic to a cartesian product of a ball $D$ times totally disconnected set $K$. 
Note that by the definition of $\mathcal D$ we know that it is possible to move smoothly within the factor $D$, 
while the previous proposition means that there is also a smooth dynamic within the factor $K$. 

\medskip
 
\begin{rem} 
\upshape{
Note that there are at least two natural topologies over $\Omega$: the $D$-topology 
and the topology induced by the metric $d$. 
It is not difficult to prove that the former is finer
than the latter.
}
\end{rem}

\medskip

\subsection{The bundle $\tilde p:\Omega \to  \mathbb{T}_\alpha$}

In the current section will discuss on how to bring Sadun-Williams' (Theorem \ref{thm:SW-bundle}) 
construction into the diffeological setting. 
For simplicity, we consider first an unidimensional tiling space $\Omega$. 
Recall from Theorem \ref{thm:APinverselimits} that $\Omega$ is conjugate --and hence diffeomorphic following 
Proposition \ref{prop:conjugadossyssdifeomorfos}-- to the inverse limit of branched curves $\Omega_n$
$$
\Omega = \lim_{\leftarrow n} \{\Omega_n, f_n\}.$$ 

The construction of Theorem \ref{thm:SW-bundle} relies on Theorem \ref{thm:APinverselimits} 
as it uses the maps between $\Omega$ and its aproximant $\Omega_0$ in order to obtain
a (topological) fiber bundle with total space $\Omega$, the one-dimensional torus $\mathbb S^1$ as base space, 
and fibers homeomorphic to the Cantor set $K$. 
Consider such fibration
$$
K\hookrightarrow \Omega \stackrel{p}{\to} \mathbb S^1.
$$
The map $p$ is the composition of maps
$$
\Omega \stackrel{\pi_0}{\longrightarrow}  \Omega_0  \stackrel{q}{\longrightarrow} \mathbb R \stackrel{\pi}{\longrightarrow} \mathbb S^1,$$
where 
$\pi_0: \Omega \to \Omega_0$ is the projection onto the first factor of $\Omega$ as an inverse limit,
$q$ is the function that maps the cells of $\Omega_0$ onto intervals $[n,n+1]\subset \mathbb R$, 
and $\pi$ is the quotient map $\pi : \mathbb R\to \mathbb R/\mathbb Z$.  
Thus the function $p$ maps the tiling $T$ to the position of its origin in the tile it belongs to, projected onto $\mathbb S^1$. 

\smallskip

In the diffeological context, it is more natural to consider the following alternative map:  
 let $\{a_i\}$ be the set of lengths 
 of the tiles of $T$. Then, instead of the previous quotient map $\pi: \mathbb R \to \mathbb S^1$, 
 we map onto the quotient space $\mathbb R/(a_1\mathbb Z + \ldots +  a_n\mathbb Z)$, 
 by mapping the vertices of a tiling $T$ to $a_1\mathbb Z + \ldots +  a_n\mathbb Z$, instead of $\mathbb Z$. 
 Note that this is precisely a \emph{return module}.

\medskip

\begin{definition}
  \label{def:returnmodule}
  {\upshape Let $P$ be a patch of a tiling $T$ of $\mathbb R^d$. Define the \emph{return vectors} of $P$ as the set of vectors 
  $v$ mapping $P$ to other occurences of $P$ in the tiling, \emph{i.e.} the vectors $v$ such that the patern 
  at $P+v$ coincides with $P$. 
  Define the \emph{return module} $R_P$ of the patch $P$ as the $\mathbb Z$-span of the return vectors. 
  }
\end{definition}

\medskip

So, for a given tiling space $\Omega$ in $\mathbb R^d$, we pick a tiling $T\in \Omega$ and a vertex $v$ on it. 
Let $\mathbb T_v = \mathbb R^d/ R_v$, where $R_v$ is the return module of the vertex $v$. 
We define $\tilde p$ as the composition of maps 
$$
\Omega \stackrel{\hat q}{\longrightarrow} \mathbb R^d \stackrel{\mathrm{pr}_v}{\longrightarrow} \mathbb{T}_v$$
In this case $\hat q$ is the function that maps a tiling $T$ onto the position of its origin in $\mathbb R^d$, 
and $\mathrm{pr}_v$ is the quotient map $\mathrm{pr}_v : \mathbb R^d\to \mathbb T_v$. 
Thus $\tilde p$ is the function that maps a tiling $T$ to the equivalence class of the location of its origin 
in the quotient space $\mathbb{T}_v$. Note that in the familiar case where $d=1$ and the tile set is $\{1,\alpha\}$, 
the return module $R_v$ coincides with $\mathbb Z + \alpha \mathbb Z$ and so the base space of the function 
$\tilde p$ is the irrational torus $\mathbb T_\alpha$. 


\medskip

\begin{lemma}

With the diffeology $\mathcal D$ for $\Omega$ from (\ref{def:diff-omega}), the standard diffeology for $\mathbb R^d$, 
and the quotient diffeology for $\mathbb T_v$, the map $\tilde p: \Omega \to \mathbb{T}_v$ is a smooth surjection.  
\end{lemma}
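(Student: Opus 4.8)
The plan is to exploit the fact that the diffeology $\mathcal D$ on $\Omega$ is generated by the family $\mathcal F$ of orbit maps, so that smoothness of $\tilde p$ need only be tested against the $\hat T$. First I would recall the description of $\langle\mathcal F\rangle$ given right after Definition \ref{def:diff-omega}: an arbitrary plot $P:U\to\Omega$ is, in a neighborhood $V$ of each point $u$, either constant or of the form $P|_V=\hat T\circ Q$ for some orbit map $\hat T\in\mathcal F$ and some smooth $Q:V\to\mathbb R^d$. Since axiom D2 lets me check that $\tilde p\circ P$ is a plot neighborhood by neighborhood, and the constant case is immediate, it suffices to show that $\tilde p\circ\hat T:\mathbb R^d\to\mathbb T_v$ is a plot of $\mathbb T_v$ for every $T\in\Omega$; granting this, $\tilde p\circ P|_V=(\tilde p\circ\hat T)\circ Q$ is a plot by axiom D3.

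The heart of the argument is the computation of $\tilde p\circ\hat T$. Translating a tiling by $x$ moves its underlying vertex set by $x$, so the displacement of the origin of $\mathbb R^d$ relative to the vertex structure of $T+x$ is that of $T$ shifted by $-x$; that is, $\hat q(T+x)=\hat q(T)-x$. Consequently
\[
\tilde p\bigl(\hat T(x)\bigr)=\mathrm{pr}_v\bigl(\hat q(T+x)\bigr)=\mathrm{pr}_v\bigl(\hat q(T)-x\bigr),
\]
so that $\tilde p\circ\hat T=\mathrm{pr}_v\circ A_T$, where $A_T:\mathbb R^d\to\mathbb R^d$ is the affine bijection $A_T(x)=\hat q(T)-x$. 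Because $\mathrm{pr}_v:\mathbb R^d\to\mathbb T_v$ is the quotient projection it is itself a plot of $\mathbb T_v$ (the identity is a plot of $\mathbb R^d$ and $\mathbb T_v$ carries the push-forward diffeology), and precomposing it with the smooth map $A_T$ keeps it a plot by D3. This shows $\tilde p\circ\hat T$ is a plot, hence $\tilde p$ is smooth. Surjectivity then comes for free: $A_T$ is a bijection of $\mathbb R^d$ and $\mathrm{pr}_v$ is onto, so already the single orbit $\hat T(\mathbb R^d)$ is sent onto all of $\mathbb T_v$.

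The step I expect to require the most care is the well-definedness underlying the identity $\hat q(T+x)=\hat q(T)-x$. The map $\hat q$ is not globally single-valued on $\Omega$: the position of the origin is only pinned down once a nearby vertex of type $v$ is chosen as reference, and as $x$ varies the origin crosses vertices, so any local lift jumps precisely by a return vector of $v$. The point is that these jumps lie in $R_v$ by the very Definition \ref{def:returnmodule} of the return module, so $\mathrm{pr}_v\circ\hat q$ is insensitive to them and $\tilde p$ is a genuine, well-defined map even though $\hat q$ is only defined modulo $R_v$. I would therefore phrase everything in terms of $\mathrm{pr}_v\bigl(\hat q(T)-x\bigr)$, where any fixed representative of $\hat q(T)$ may be used, noting that two choices differ by an element of $R_v$ and hence give the same point of $\mathbb T_v$. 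Once this well-definedness and the affine form of $\tilde p\circ\hat T$ are in place, smoothness and surjectivity follow from the elementary diffeological manipulations above.
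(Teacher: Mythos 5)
Your proposal is correct and takes essentially the same approach as the paper's proof: the paper likewise argues that every plot of $\Omega$ is locally a translation, so that $\hat q\circ P$ is locally an affine map into $\mathbb R^d$ and hence a plot, and that surjectivity follows because the origin of a suitable tiling can be placed at any representative $z$ of a class $\bar z\in\mathbb T_v$. The only difference is one of care: you make explicit both the reduction to the generating family via axioms D2--D3 and the well-definedness of $\hat q$ modulo the return module $R_v$, points the paper's shorter argument leaves implicit.
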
 

\begin{proof}
The map $\tilde p$ is clearly surjective as, for a given $\bar z \in \mathbb{T}_v$ represented by 
$z+\sum_{i=1}^n v_i n_i$ with $z \in \mathbb R, n_i \in \mathbb Z$, and $\{ v_i \}$ the return vectors of a given vertex in $\Omega$, 
there is a tiling $T\in \Omega$ whose origin lies in $z$ that is mapped to $\bar z$. 
For proving smoothness note that the map $\mathrm{pr}_v$ is smooth by definition and 
every plot $P$ of $\Omega$ is locally a translation, so $\hat q\circ P$ is also a translation in $\mathbb R^d$ 
and translations are smooth, so it is a plot of $\mathbb R^d$.   
 Hence $\tilde p $ is smooth.
\end{proof}


\subsubsection{The structure groupoid for $\tilde p: \Omega \to \mathbb{T}_v$. }

We proceed to investigate the structure groupoid associated to the smooth surjection $\tilde p: \Omega \to \mathbb{T}_v$. Recall from \ref{SS:DFB}. 
 that $G_0 = \mathbb{T}_v$ and $$ G_1 = \bigcup_{z,z'\in G_0} \mathbf{Diff} (\Omega_z, \Omega_{z'}),$$
for $\Omega_z := \tilde p^{-1}(z)$. 
 The maps \emph{source} $s:G_1 \to G_0$ and \emph{target} $t:G_1 \to G_0$ are defined by $s(\varphi) = \tilde p(dom(\varphi)) $ and $ t(\varphi) = \tilde p(ran(\varphi))$. 

A diffeology for the groupoid $(G_1,G_0)$ is defined as the coarsest diffeology such that the evaluation map $ev: X_s\to \Omega$, given by $ev(\varphi,T)=\varphi(T)$, from the pull-back $$X_s = \{(\varphi, T)\in G_1\times\Omega: T \in dom(\varphi)\}$$ along the map $s$, is smooth and makes $(G_1,G_0)$ a diffeological groupoid. 

\bigskip

\begin{prop}
The map $\tilde p: \Omega \to \mathbb{T}_v$ is a diffeological fibration.
\end{prop}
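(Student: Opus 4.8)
The plan is to verify the criterion of Definition \ref{def:fibrant} directly: I will show that the characteristic map $\chi : G_1 \to \mathbb{T}_v \times \mathbb{T}_v$, $\varphi \mapsto (s(\varphi), t(\varphi))$, of the structure groupoid $K_{\tilde p}$ is a subduction. The engine of the whole argument is the translation action itself. For each $x \in \mathbb{R}^d$ the translation $\tau_x : \Omega \to \Omega$, $\tau_x(T) = T + x$, is smooth by the very Definition \ref{def:diff-omega} of $\mathcal D$ and has smooth inverse $\tau_{-x}$, hence is a diffeomorphism. First I would record the equivariance $\tilde p(T + x) = \tilde p(T) + \mathrm{pr}_v(x)$ on $\mathbb{T}_v = \mathbb{R}^d / R_v$, which follows from $\tilde p = \mathrm{pr}_v \circ \hat q$ once $\hat q$ is seen to intertwine the translation action on $\Omega$ with translation on $\mathbb{R}^d$ (with the sign fixed by the chosen convention for $\hat q$). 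Consequently $\tau_x$ restricts to a diffeomorphism $\Omega_z \xrightarrow{\sim} \Omega_{z + \mathrm{pr}_v(x)}$ for every $z$, so that $\tau_x|_{\Omega_z}$ is an arrow of $K_{\tilde p}$ with source $z$ and target $z + \mathrm{pr}_v(x)$.

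Surjectivity of $\chi$ then follows at once: given $(z, z') \in \mathbb{T}_v \times \mathbb{T}_v$, I choose any $x \in \mathbb{R}^d$ with $\mathrm{pr}_v(x) = z' - z$ (possible since $\mathrm{pr}_v$ is onto) and observe that $\chi(\tau_x|_{\Omega_z}) = (z, z')$. To upgrade surjectivity to the subduction property I must locally lift an arbitrary plot of $\mathbb{T}_v \times \mathbb{T}_v$ to a plot of $G_1$. Let $(Q_1, Q_2) : W \to \mathbb{T}_v \times \mathbb{T}_v$ be a plot. Since $\mathrm{pr}_v$ is the quotient map defining the diffeology of $\mathbb{T}_v$, and hence a subduction, after shrinking $W$ around a chosen point I may lift the $Q_i$ to smooth maps $\tilde Q_i : W \to \mathbb{R}^d$ with $\mathrm{pr}_v \circ \tilde Q_i = Q_i$. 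The natural candidate is the family of translation arrows
$$
\Phi : W \to G_1, \qquad \Phi(w) = \tau_{\tilde Q_2(w) - \tilde Q_1(w)}\big|_{\Omega_{Q_1(w)}}.
$$
By the computation above $s(\Phi(w)) = Q_1(w)$ and $t(\Phi(w)) = Q_1(w) + \mathrm{pr}_v(\tilde Q_2(w) - \tilde Q_1(w)) = Q_2(w)$, so $\chi \circ \Phi = (Q_1, Q_2)$ on $W$.

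It remains to check that $\Phi$ is genuinely a plot of $G_1$. By the definition of the groupoid diffeology — the coarsest one making the evaluation $ev : X_s \to \Omega$ smooth, exactly as for the functional diffeology recalled in Section \ref{ss:diff-sp} — this reduces to verifying that $s \circ \Phi = Q_1$ and $t \circ \Phi = Q_2$ are plots (true by hypothesis) together with smoothness of the evaluation $(w, T) \mapsto \Phi(w)(T) = T + (\tilde Q_2(w) - \tilde Q_1(w))$ on $\{(w,T) : T \in \Omega_{Q_1(w)}\}$. But this map factors as the smooth assignment $(w, T) \mapsto (\tilde Q_2(w) - \tilde Q_1(w),\, T)$ followed by the translation action $\mathbb{R}^d \times \Omega \to \Omega$, which is smooth by Definition \ref{def:diff-omega}; hence the evaluation is smooth and $\Phi$ is a plot. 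This produces the required local lift, $\chi$ is a subduction, and $\tilde p$ is a diffeological fibration.

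The delicate point, and the step I expect to demand the most care, is precisely this last verification that $\Phi$ qualifies as a plot of $G_1$: the groupoid diffeology is specified only implicitly as the coarsest one rendering $ev$ smooth, so one must argue through its universal property rather than through an explicit description of plots, and one must confirm that the lifts $\tilde Q_i$ can be chosen on a common neighborhood and that the equivariance of $\hat q$ holds on the nose for the chosen convention. As a cross-check the same conclusion is reachable through Lemma \ref{lemma:lt.a.plots}: the family $\Phi$ exhibits $\tilde p$ as locally trivial along the plots of $\mathbb{T}_v$, with fiber the totally disconnected set $K$.
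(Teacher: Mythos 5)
Your proof is correct and follows essentially the same route as the paper's: both arguments establish that the characteristic map $\chi$ is a subduction by locally lifting a plot $(Q_1,Q_2)$ of $\mathbb{T}_v\times\mathbb{T}_v$ through $\mathrm{pr}_v$ to $\mathbb{R}^d$-valued maps and then using the translation arrows $T\mapsto T-\tilde Q_1(w)+\tilde Q_2(w)$ as the lifted plot of $G_1$, with smoothness coming from the smoothness of the translation action in Definition \ref{def:diff-omega}. The only cosmetic difference is that you verify the plot condition for $G_1$ via the universal property of the evaluation map, while the paper checks the equivalent explicit conditions on $P_s$, $P_t$, and $\chi\circ P$.
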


\begin{proof} 

We must show that the structure groupoid $(G_1,G_0)$ associated to $\tilde p$ is fibrating, \emph{i.e.} 
the characteristic map $\chi: G_1 \to \mathbb{T}_v \times \mathbb{T}_v $ is a 
subduction\footnote{See discussion after Example \ref{ex:standard-diff}.}. 
In order to conclude that, we have to show that for every plot $Q:U \to \mathbb{T}_v\times \mathbb{T}_v$ 
and every point $u\in U$ there exists a neighborhood $V$ of $u$ such that either $Q|_V$ is constant or there exists a plot $P: V\to G_1$ that locally lifts $Q$. 

The plots of $G_1$ are specified as those parametrizations $P$ such that the maps $\chi\circ P$ and 
$$
P_s : \Omega_{s\circ P }\to \Omega \text{  given by  } P_s(u, T) = P(u)(T) 
$$

$$
P_t : \Omega_{t\circ P }\to \Omega \text{  given by  } P_t(u, T) = P(u)^{-1}(T),
$$
where $\Omega_{s\circ P } $ stands for the pull-back of $\tilde p$ along $s\circ P$, and analogously $\Omega_{t\circ P }$, are smooth. 

Let $Q:U\to \mathbb{T}_v \times \mathbb{T}_v$ be a plot for the product diffeology on $\mathbb{T}_v \times \mathbb{T}_v$, 
then $Q(u) = (Q_1(u),Q_2(u))$ where $Q_i$ a plot of $\mathbb{T}_v$. 
Moreover each $Q_i$ lifts locally (say in neighborhoods $V_i$, with $V_1 \times V_2 := V \subset U$) to a plot 
$\bar Q_i$ of $\mathbb R^d$ because the  diffeology on $\mathbb{T}_v$ coincides with the quotient diffeology 
$\mathbb R^d/R_v$. 

Let $u\in U$, $z_s = \bar Q_1(u)$, $z_t = \bar Q_2(u), \bar z_s = \mathrm{pr}_v(z_s)$ and $\bar z_t = \mathrm{pr}_v(z_t)$. 
Then $z_s,z_t \in \mathbb R^d$ and $\bar z_s, \bar z_t \in \mathbb{T}_v$. Let $\varphi_{z_s,z_t}: \Omega \to \Omega$ be the map given by 
$$ \varphi_{z_s,z_t}(T)= T-z_s + z_t. $$ 
Note that this map translates the fiber 
$\tilde p^{-1} (\bar z_s):=\Omega_{\bar z_s}$ to $\tilde p^{-1} (\bar z_t):=\Omega_{\bar z_t}$. 
It is moreover a diffeomorphism between these fibers: it is smooth because it is a 
translation as well as its inverse. Then it is an element of $G_1$ satisfying 
$s(\varphi_{z_s,z_t})=\bar z_s$ and $t(\varphi_{z_s,z_t})=\bar z_t$. 
Thus we define a parametrization $P:V\to G_1$ given by $ P(u) = \varphi_{z_s,z_t} $ and assert 
that it is the plot we needed.

The maps $P_s$, $P_t$ are immediately seen to be smooth because they are translations of tilings, and $\chi\circ P$ equals $(\mathrm{pr}_v \times \mathrm{pr}_v) \circ \bar Q$ which is a composition of smooth maps, and so it is also smooth. Hence $P$ is a plot of $G_1$ and it lifts the plot $Q:U \to \mathbb{T}_v\times \mathbb{T}_v$ by construction. Consequently $\chi$ is a subduction and the groupoid $(G_1,G_0)$ is fibranting.

$$
\xymatrix{
V \ar[r]^P\ar@{_(->}[d] \ar[rd]^{\bar Q} & G_1 \ar@{-->}[d]  \ar@/^/[rdd]^\chi \\
U  \ar@/_/[rrd]_-Q   &  \mathbb R^d \times \mathbb R^d \ar[rd]^-{\mathrm{pr}_v^2}\\
& &  \mathbb{T}_v  \times \mathbb{T}_v  } \\
$$

\end{proof}

\medspace

\begin{theorem-a}
Let $\Omega$ be a space of repetitive tilings of $\mathbb R^d$. 
Then $\Omega$ is a covering space over $ \mathbb R^d / R_v = \mathbb{T}_v$.  
\end{theorem-a}
\begin{proof}
  
  The result is essentially the previous proposition plus the fact that, 
  just as in the topological case, a \emph{covering space} in diffeology is a fiber bundle with discrete fiber. 

\end{proof}

\smallskip

This map has the advantage over Williams' and Sadun-Williams constructions that the proportion of the length of the tiles is preserved. 
  In the one-dimensional case, it provides a necessary condition for diffeomorphism between tiling spaces and thus for 
  orbital equivalence. This is due to the classification of irrational tori \cite{PIZ85}, as we shall see in the following section.

\bigskip


    


\section{Diffeological and sturmian equivalences for Tiling spaces }
\label{section:diff-sturm}

In the present section we study another bundle structure from projection method 
one-dimensional tiling spaces $\Omega_\alpha$
 over $\mathbb T_\alpha$. 
We will bring notions of equivalence between sturmian systems (Lemma \ref{Lemma:EquivSturmian}) 
into the context of tiling spaces through their diffeological structures.  
We will relate explicitly the sturmian sequences of $(X_\alpha,\sigma)$ with 
canonical projection tilings in $\Omega_\alpha$.

\smallskip

For this matter we define a function $\Phi : \Omega_\alpha \to X_\alpha$ which ``forgets" the length of the 
tiles but preserves the combinatorics of every tiling, \emph{i.e.} the funcion $\Phi$ maps the tiling $T$ onto the sequence 
$S\in\{0,1\}^\mathbb Z$ encoding the tiles of $T$, 
where $\{0,1\}$ represent the proto-tile set of $T$. 
This function is not invertible but there is a canonical section
$\Psi : X_\alpha \to \Omega_\alpha$ which assigns to a sturmian sequence $S$ the tiling $\Psi(S)=T$
in the tile set $\{0,1\}$ with lenghts $\frac{1}{\sqrt{1+\alpha^2}}$ and $\frac{\alpha}{\sqrt{1+\alpha^2}}$, 
respectively, 
appearing with the same combinatorics as in $S$, and with its origin placed on the left vertex of 
the tile associated to $s_0$. We define a diffeology for $X_\alpha$ as the pull-back diffeology of $\Omega_\alpha$
along the map $\Psi$. Thus  the map $\Psi$ is automatically smooth. 


\medskip

\begin{lemma}
  \label{lemma:Phiissmooth}
  With the aforementioned diffeology, the map $\Phi$ is smooth. 

\end{lemma}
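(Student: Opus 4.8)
The plan is to exploit the defining property of the pull-back diffeology on $X_\alpha$. Since that diffeology is the pull-back of $\mathcal{D}$ along $\Psi$, a parametrization $R:V\to X_\alpha$ is a plot of $X_\alpha$ if and only if $\Psi\circ R$ is a plot of $\Omega_\alpha$. Hence, to prove that $\Phi$ is smooth it suffices to show that for every plot $P:U\to\Omega_\alpha$ the composite $\Psi\circ\Phi\circ P$ is a plot of $\Omega_\alpha$. This reduces the whole question to understanding the single self-map $\Psi\circ\Phi:\Omega_\alpha\to\Omega_\alpha$, which retracts each tiling onto the canonical representative of its combinatorial type whose origin sits on the left vertex of the tile containing $0$.

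First I would invoke the generating-family description of $\mathcal{D}$ established after Definition \ref{def:diff-omega}: around every point of $U$ the plot $P$ is either constant or of the form $\hat T\circ Q$ for some $T\in\Omega_\alpha$ and some smooth $Q:V\to\mathbb R$. The constant case is immediate, so I focus on $P|_V=\hat T\circ Q$, that is $P(v)=T+Q(v)$. The key observation is that $\Phi$ is insensitive to translations that do not move the origin across a vertex: as long as $-Q(v)$ stays in the interior of a fixed tile of $T$, the origin-tile of $T+Q(v)$, and therefore the entire bi-infinite sequence read off from it, does not change. Consequently $\Phi\circ P$ is locally constant on the open set where $-Q$ avoids the vertex set of $T$ (which is uniformly discrete by the FLC hypothesis implicit in Proposition \ref{repLI}), and on that set $\Psi\circ\Phi\circ P$ is a constant parametrization, hence a plot by axioms D1 and D2.

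The hard part is the behaviour at the points where $-Q(v)$ meets a vertex of $T$, that is, exactly where $P$ crosses the transversal $\Psi(X_\alpha)$ of tilings having a vertex at the origin. There the combinatorial type shifts by $\sigma^{\pm 1}$ and $\Psi\circ\Phi\circ P$ changes by a translation equal to one tile length, so the naive local-constancy argument must be supplemented. I would handle this with the half-open convention fixing $s_0$ (the origin-tile being the tile $[\ell,r)$ with $\ell\le 0<r$): this convention assigns a unique value at each such point and makes $\Phi\circ P$ constant on the appropriate half-open pieces cut out by the monotone behaviour of $Q$ near the crossing, the relevant lengths being $\tfrac{1}{\sqrt{1+\alpha^2}}$ and $\tfrac{\alpha}{\sqrt{1+\alpha^2}}$. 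The delicate point, and the step I expect to require the most care, is to verify that these pieces genuinely assemble through the locality axiom D2 into a bona fide plot of the totally disconnected space $X_\alpha$, rather than producing a spurious discontinuity of $\Psi\circ\Phi\circ P$ in $\Omega_\alpha$; this is precisely where the transversal structure and the ambiguity underlying the ``two origins'' of $\mathbb{T}_\alpha^{\bullet}$ must be reconciled with the smoothness requirement.
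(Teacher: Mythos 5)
Your reduction and your treatment of the generic case coincide with the paper's: you use the defining property of the pull-back diffeology to reduce the claim to showing that $\Psi\circ\Phi\circ P$ is a plot of $\Omega_\alpha$, and you invoke the generating-family description of $\mathcal D$ to write $P$ locally as $\hat T\circ Q$ with $Q$ smooth; away from vertex crossings your local-constancy observation is correct (and in fact more precise than anything in the paper's proof). The genuine gap is the crossing case, which you yourself flag as unresolved, and the repair you sketch cannot work. Axiom D2 quantifies over \emph{open} neighborhoods: at a parameter $v_0$ where $-Q(v_0)$ hits a vertex of $T$, every open neighborhood of $v_0$ meets both of your half-open pieces, so on that neighborhood $\Psi\circ\Phi\circ P$ takes two values differing by a nonzero translation (one tile length). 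Such a restriction is not constant, and it also cannot be written as $\hat{T'}\circ Q'$ with $Q'$ smooth: every tiling in $\Omega_\alpha$ is non-periodic, so orbit maps are injective, and the factoring map $Q'$ is then forced pointwise to be the jump function itself, which is discontinuous at $v_0$. The cleanest way to see the obstruction is to take $P=\hat T$: then $\Psi\circ\Phi\circ\hat T(x)=T-\ell\bigl(t(-x)\bigr)$, where $t(-x)$ is the tile of $T$ containing $-x$ and $\ell$ its left vertex; this is a step function into the discrete set of translates of $T$ by its vertices, and near any jump it fails the plot criterion for $\langle\mathcal F\rangle$. No choice of half-open convention helps, since the criterion is insensitive to the value at the single point $v_0$ and only sees the two-sided jump.

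You should also know that the missing step is not supplied by the paper. Its proof disposes of exactly this point in one sentence: $T+g(v)$ and $(\Psi\circ\Phi)(T+g(v))$ ``have exactly the same combinatorics, so they must be translates of each other,'' hence $\Psi\circ\Phi\circ P$ is ``locally a translation, and thus a plot.'' That inference converts a pointwise statement (each value is \emph{some} translate of $P(v)$) into a local factorization through a single orbit map via a smooth map, which is precisely what fails at crossings: the translation amount is the step function above. So your analysis, pushed to its conclusion, does not merely leave a hole in your own argument; it isolates a step that the paper's proof asserts without justification, and which would require either a different argument or a change in the setup (for instance, equipping $X_\alpha$ with the push-forward diffeology along $\Phi$, under which $\Phi$ is smooth by construction and the burden shifts to $\Psi$) to close.
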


\begin{proof}
  Let $P:U\to \Omega_\alpha$ be a plot of $\Omega_\alpha$. We must show that 
  $\Phi\circ P : U \to X_\alpha$ is a plot of $X_\alpha$. But this is the case
  if and only if $\Psi\circ\Phi\circ P$ is a plot of $\Omega_\alpha$. 
  As $P$ is a plot of $\Omega_\alpha$, 
  by definition of the generating family $\mathcal F$
  (see Definition \ref{def:diff-omega}) for the diffeology of $\Omega_\alpha$,
  for every $u\in U$ there exists a neighborhood $V\subset U$ containing $u$,
  a smooth map $g: V \to \mathbb R$, and an element $\hat T:  \mathbb R^d\to \Omega_\alpha$ of the generating family, 
  with $T \in \Omega_\alpha$, such that $P|_V = \hat T \circ g$. Then 
  \begin{equation}
    (\Psi\circ\Phi\circ P)|_V (v) = (\Psi\circ\Phi)\circ \hat T \circ g (v)= (\Psi\circ\Phi)(T+g(v)).
  \end{equation} 
  Note that both $T+g(v)$ and $(\Psi\circ\Phi)(T+g(v))$ are elements of $\Omega_\alpha$ with exactly 
  the same combinatorics, so they must be translates of each other. So $\Psi\circ\Phi\circ P$ is 
  locally a translation, and thus a plot of $\Omega_\alpha$. Then $\Phi$ is smooth. 
\end{proof}

\medskip

\begin{lemma}
  \label{lemma:shiftissmooth}
  
  With the aforementioned diffeology for $X_\alpha$ the shift action $\sigma: X_\alpha \to X_\alpha$ is 
  a diffeomorphism. 

\end{lemma}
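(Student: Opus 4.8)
The plan is to show that the shift map $\sigma : X_\alpha \to X_\alpha$ is smooth with a smooth inverse, where $X_\alpha$ carries the pull-back diffeology along $\Psi$. The guiding principle is the same as in Lemma \ref{lemma:Phiissmooth}: smoothness into $X_\alpha$ is tested by post-composing with $\Psi$ and checking that the result is a plot of $\Omega_\alpha$, and by the structure of the diffeology $\mathcal D$ this reduces to verifying that the relevant maps are \emph{locally translations}. The key geometric observation is that shifting a symbolic sequence by one position corresponds, on the tiling side, to translating the tiling by the length of a single tile — but that length depends on whether the tile at position $0$ is a $0$-tile or a $1$-tile.

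First I would make the correspondence between $\sigma$ and translation precise. For a sturmian sequence $S$ with $\Psi(S) = T$, the tiling $\Psi(\sigma S)$ is obtained from $T$ by moving the origin from the left vertex of the tile $s_0$ to the left vertex of the tile $s_1$; that is, $\Psi(\sigma S) = T - \ell(s_0)$, where $\ell(s_0) \in \{\tfrac{1}{\sqrt{1+\alpha^2}}, \tfrac{\alpha}{\sqrt{1+\alpha^2}}\}$ is the length of the first tile. Equivalently, $\Psi \circ \sigma = \tau \circ \Psi$, where $\tau$ translates each tiling by (minus) the length of the tile sitting at its origin. To prove smoothness of $\sigma$ it suffices to show $\Psi \circ \sigma = \tau \circ \Psi$ is a plot whenever precomposed with a plot of $X_\alpha$; since every plot of $X_\alpha$ is by definition of the form $\Psi$ post-composed appropriately, and since $\Psi$ of a plot is locally a translation $\hat T \circ g$ in $\Omega_\alpha$, I would track what $\tau$ does to such a local translation.

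The crux is that $\tau$, though it translates by an amount $\ell$ that is only locally constant, is still locally a translation: the set of tilings whose origin lies in the interior of a $0$-tile (resp. a $1$-tile) is $D$-open in $\Omega_\alpha$, and on each such piece $\tau$ translates by a \emph{fixed} length. So for a plot $P|_V = \hat T \circ g$, after possibly shrinking $V$ the composite $\tau \circ \hat T \circ g$ equals $\widehat{T - \ell} \circ g$ for a constant $\ell$, which is again an element of the generating family precomposed with the smooth map $g$, hence a plot of $\Omega_\alpha$. This establishes smoothness of $\sigma$. For the inverse, the identical argument applies to $\sigma^{-1}$, which corresponds to translating by the length of the tile immediately to the \emph{left} of the origin; this is again locally constant and $D$-open, so $\sigma^{-1}$ is smooth by the same reasoning. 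Therefore $\sigma$ is a diffeomorphism.

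I expect the main obstacle to be the points of ambiguity: those sequences whose origin sits exactly on a vertex shared by two tiles, where the decomposition of $\Omega_\alpha$ into ``origin in a $0$-tile'' and ``origin in a $1$-tile'' overlaps or is not cleanly $D$-open. These are precisely the sequences corresponding to the line $L_\alpha$ passing through a lattice point (the double-origin ambiguity discussed in Remark \ref{rem:cpsts}). At such a configuration the translation length is well-defined because the tile \emph{to the right} of a left-vertex is unambiguous even when two symbolic preimages exist, so $\tau$ remains a genuine locally-translation map there; nonetheless one must check that the requisite $D$-open neighborhoods still exist so that the local-translation argument goes through uniformly. Handling this boundary case carefully — confirming that the ``origin at left vertex'' convention makes $\tau$ well-defined and locally constant even across the ambiguous fibers — is where the argument requires the most attention.
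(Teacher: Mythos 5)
Your proof is correct and follows essentially the same route as the paper's: both test smoothness of $\sigma$ by composing a plot of $X_\alpha$ with $\Psi$, using the generating family to write $\Psi\circ P$ locally as $\hat T\circ g$, and then showing $\Psi\circ\sigma\circ P$ is again locally of the form $\widehat{T'}\circ g$ (a translate-plot of $\Omega_\alpha$), with the sign-reversed translation handling $\sigma^{-1}$. If anything, you are more careful than the paper: the paper records the shift on the tiling side as translation by $+\delta_\alpha$ determined by $\sigma(S)_0$ (rather than your correct $-\ell(s_0)$) and silently treats that increment as constant over the neighborhood $V$, a point you address explicitly via the local constancy of the tile type at the origin.
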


\begin{proof}
  
  We must show first that the composition of every plot of $X_\alpha$ with the shift map is a plot of $X_\alpha$ itself. 
  Let $P:U \to X_\alpha$ be a plot of $X_\alpha$. By definition of the pull-back diffeology along the map $\Psi$ defined above, 
  the composition $\Psi \circ P$ is a plot of $\Omega_\alpha$. Then, by definition of the generating family $\mathcal F$
  for the diffeology of $\Omega_\alpha$, 
  for every $u \in U$ there exists a neighborhood $V\subset U$ containing $u$, 
  a smooth map $g: V \to W \subset \mathbb R$, and an element $\hat T:  \mathbb R^d\to \Omega_\alpha$ of $\mathcal F$, 
  with $T \in \Omega_\alpha$, such that 
  \begin{equation}
    (\Psi\circ P)|_V = \hat T \circ g.
  \end{equation} 
  This particularly means that a translate of the tiling $T$ is in the image of the map $\Psi$. Let $v \in V$,
    $P(v) = S\in X_\alpha$ and $\Psi(S) = T+x$, with $x = g(v)$. Then
 $$
  \Psi (\sigma(S)) = T + x + \delta_{\alpha},
 $$
  where 
 
 \begin{equation}\label{fnct:delta}
 \delta_{\alpha} = 
  \left\{
	\begin{array}{lll}
		\frac{1}{\sqrt{1+\alpha^2}} & & \mbox{ if  } \sigma(S)_0=0 \\
    \\
	  \frac{\alpha}{\sqrt{1+\alpha^2}} & & \mbox{ if  } \sigma(S)_0=1.
	\end{array} 
  \right.
 \end{equation}
  
Then
$$
  (\Psi \circ \sigma \circ P)|_V(v) = \widehat{(T+ \delta_{\alpha})} (g(v)),
 $$
thus $\Psi \circ \sigma \circ P$ is a plot of $\Omega_\alpha$. Which means that $\sigma \circ P$ is a plot of $X_\alpha$. Therefore $\sigma$ is smooth.

The same argument works for proving the smoothness of $\sigma^{-1}$, whose action is carried onto 
$\Omega_\alpha$ as a translation in the opposite direction. So the $\delta_\alpha$ from above is negative in the present case. 
We then conclude that $\sigma$ is a diffeomorphism. 

\end{proof}

\medskip

An inductive argument gives us that 
   the $\mathbb Z$-action on $X_\alpha$ given by
  $$
  (n,S) \mapsto \sigma^n(S)
  $$
  is smooth.

\bigskip

\begin{lemma}
  \label{lemma:oeiffdiffeo}
  
  Let $X_\alpha$ and $X_\beta$ be sturmian spaces endowed with the diffeology from above. 
  They are orbit equivalent (Definition \ref{def:oecms}) if and only if they are diffeomorphic.
  
\end{lemma}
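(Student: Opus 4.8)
The plan is to mirror the proof of Proposition~\ref{prop:conjugadossyssdifeomorfos}, replacing the smooth $\mathbb R$-action by translation with the smooth $\mathbb Z$-action by the shift $\sigma$, which is a diffeomorphism of $X_\alpha$ by Lemma~\ref{lemma:shiftissmooth}. The two ingredients I would isolate at the outset are: (i) every plot of $X_\alpha$ is, through the section $\Psi$, locally read off from a plot of $\Omega_\alpha$, hence locally of the form $\Psi^{-1}$ applied to an orbit map $\hat T\circ g$, so that moving within a plot corresponds to moving within a single $\mathbb R$-orbit of $\Omega_\alpha$; and (ii) the shift realizes the generator of the $\sigma$-orbit as the translation by $\delta_\alpha$ (Eq.~\ref{fnct:delta}) on the image $\Psi(X_\alpha)$. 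Together these say that the diffeological data of $X_\alpha$ records exactly the orbit structure of the sturmian system, which is precisely what an orbit equivalence is required to preserve.

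For the forward implication, let $h\colon X_\alpha\to X_\beta$ be an orbit equivalence with associated cocycle $n\colon X_\alpha\to\mathbb Z$, so that $h\circ\sigma=\sigma^{n}\circ h$. Given a plot $P\colon U\to X_\alpha$, I would argue locally: around each $u$ the composite $\Psi_\alpha\circ P$ agrees with an orbit map $\hat T\circ g$, so $P$ takes values in a single $\sigma$-orbit and, by hypothesis, $h\circ P$ takes values in the image $\sigma$-orbit. Using $h\circ\sigma=\sigma^{n}\circ h$ together with the smoothness of the $\mathbb Z$-action established right after Lemma~\ref{lemma:shiftissmooth}, I would rewrite $\Psi_\beta\circ h\circ P$ locally as a shift of a translation, that is, as an orbit map of $\Omega_\beta$ precomposed with a smooth reparametrization. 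Hence $\Psi_\beta\circ h\circ P$ is a plot of $\Omega_\beta$, so $h\circ P$ is a plot of $X_\beta$ and $h$ is smooth; the identical argument applied to $h^{-1}$, whose cocycle is the induced inverse, shows that $h$ is a diffeomorphism.

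For the converse I would follow the path argument of Proposition~\ref{prop:conjugadossyssdifeomorfos} as far as it goes: given a diffeomorphism $h$, the goal is to show that $h$ carries each $\sigma$-orbit onto a $\sigma$-orbit, since an orbit-preserving diffeomorphism is in particular the homeomorphism demanded by Definition~\ref{def:oecms}. Because $\Phi$, $\Psi$ and $\sigma$ are all smooth, I would transport the question to $\Omega_\alpha$ and $\Omega_\beta$ through the sections, where the $\sigma$-orbit of a point is exactly the family of vertex-to-vertex translates of $\Psi(x)$ within its $\mathbb R$-orbit; a diffeomorphism intertwines the only smooth motions available, these vertex translations, and so should carry one such discrete orbit onto another. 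The main obstacle is precisely here: unlike the $\mathbb R$-orbits of Proposition~\ref{prop:conjugadossyssdifeomorfos}, the $\sigma$-orbits are discrete, so the connected path $\gamma(s)=(1-s)T+s(T+x)$ used there is not available to certify that $h(x)$ and $h(\sigma x)$ lie in a common orbit. I expect this step to require replacing the path argument by one carried out at the level of the $D$-topology together with the smooth $\mathbb Z$-action, using the cocycle induced by $h$ to certify that $h(x)$ and $h(\sigma x)$ always lie in the same $\sigma$-orbit of $X_\beta$.
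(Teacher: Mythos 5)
Your forward direction (orbit equivalence implies that $h$ and $h^{-1}$ are smooth) is consistent with what the paper intends: read plots of $X_\alpha$ through $\Psi_\alpha$ as locally factoring through orbit maps, then use the cocycle relation and the smoothness of the shift (Lemma \ref{lemma:shiftissmooth}) to recognize $\Psi_\beta\circ h\circ P$ as locally an orbit map of $\Omega_\beta$ precomposed with a smooth map. The paper compresses exactly this into its citation of Lemmas \ref{lemma:Phiissmooth} and \ref{lemma:shiftissmooth} together with its commutative diagram, so on this half you are, if anything, more explicit than the paper.

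The converse is where your proposal has a genuine gap, and the repair you suggest cannot work as stated: ``using the cocycle induced by $h$ to certify that $h(x)$ and $h(\sigma x)$ lie in the same $\sigma$-orbit'' is circular, because a cocycle $n$ with $h\circ\sigma=\sigma^{n}\circ h$ exists precisely when $h$ is already known to map $\sigma$-orbits onto $\sigma$-orbits --- which is the statement to be proved. The paper's way around the discreteness obstacle you correctly identify is not to run any connectedness argument inside $X_\alpha$ at all, but to conjugate the diffeomorphism into the tiling spaces: given a diffeomorphism $f:X_\alpha\to X_\beta$, the composite $g=\Psi_\beta\circ f\circ\Phi_\alpha:\Omega_\alpha\to\Omega_\beta$ (this is exactly the map $g$ appearing in the paper's diagram) is smooth by Lemma \ref{lemma:Phiissmooth} and the smoothness of $\Psi_\beta$, and in $\Omega_\alpha$ the translation orbits \emph{are} smoothly path-connected, so the path argument from the converse half of Proposition \ref{prop:conjugadossyssdifeomorfos} applies to $g$: the orbit of $T=\Psi_\alpha(S)$ is carried into a single translation orbit of $\Omega_\beta$. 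One then pulls this conclusion back down: $\Psi_\beta(f(\sigma S))$ and $\Psi_\beta(f(S))$ are translates of one another, and two vertex-anchored tilings in the image of $\Psi_\beta$ that are translates of one another have sequences differing by a power of the shift; injectivity of $\Psi_\beta$ and of $f$ then gives $f(\sigma S)\in O_\sigma(f(S))$, and symmetrically for $f^{-1}$. This ``go up to $\Omega$ where orbits are path-connected, then come back through the injections'' step is the idea missing from your sketch: you gesture at transporting the question through the sections but never produce a map between the tiling spaces to which Proposition \ref{prop:conjugadossyssdifeomorfos} can be applied. (In fairness, the paper records only a one-line citation and the diagram, so it does not spell this out either; but the mechanism is encoded in its definition of $g$, whereas your proposal ends at the obstacle.)
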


\begin{proof}
  
  The result is a direct consequence of Proposition \ref{prop:conjugadossyssdifeomorfos}, Lemmas \ref{lemma:Phiissmooth} and 
  \ref{lemma:shiftissmooth}, and the definition of the mappings $\Psi$ and $\Phi$. The actions may be seen in the 
  following diagram

  $$
  \xymatrix{
      X_\alpha \ar[rr]^{\Psi_\alpha} \ar[dr]^{\sigma} \ar[dd]_f & &
      \Omega_{\alpha} \ar[dd] \ar[dr]^{\nu_\alpha} \\
      &  X_\alpha \ar[rr] \ar[dd] & & \Omega_{\alpha} \ar[dd]^g \\
      X_\beta \ar[rr] \ar[dr]_{\sigma}  & &
      \Omega_{\beta} \ar[dr]_{\nu_\beta} \\
      &  X_\beta \ar[rr]_{\Psi_\beta}  & & \Omega_{\beta} \\
  }
  $$
where $\nu_\alpha:=\Psi_\alpha\circ\sigma\circ\Phi_\alpha$, $\nu_\beta:= \Psi_\beta\circ\sigma\circ\Phi_\beta$ 
and $g = \Psi_\beta \circ f\circ \Phi_\alpha$.

\end{proof}

\medskip 

\begin{theorem-b}
  Let $\Omega_\alpha$ a tiling space of canonical projection of dimension 2 to 1 and irrational 
  slope $\alpha$. Then $\Omega_\alpha$ is a $\mathbb R$-principal bundle over 
  the irrational tori with two origins  
  $\mathbb{T}_{\alpha}^{\bullet}$ defined below. 

\end{theorem-b}
\begin{proof}
The function $\Psi$ is a bijection between the orbits under the shift $\sigma$ in $X_\alpha$ and 
the orbits by continuous translation in $\Omega_\alpha$.
 We prove first that the surjection $X_\alpha\to\mathbb{T}_\alpha$ given by 
 $S(\rho)\mapsto \mathrm{pr}_\alpha(\rho)$,
  for $\mathrm{pr}_\alpha$ defined in \ref{projection:cover}, 
  is well defined. 
  Consider $\rho,\rho'\in (0,1)$ such that 
$$\rho-\rho'=e+f\alpha\in\mathbb Z+\alpha\mathbb Z,$$
and let $S(\rho),S(\rho')$ represent the sequences $\{s_n\}_{n\in\mathbb{Z}},\{s'_{n}\}_{n\in\mathbb{Z}}$ respectively. Thus, we have the following equalities
\begin{equation}
  \label{eq:conmushift}
\begin{aligned}
  s_n &= \lceil n\alpha + \rho \rceil  - \lceil (n-1)\alpha + \rho \rceil\\
  s_n &=\lceil (n+f)\alpha + \rho'+e \rceil  - \lceil (n+f-1)\alpha + \rho'+e \rceil\\
  s_n &=\lceil (n+f)\alpha + \rho' \rceil  - \lceil (n+f-1)\alpha + \rho' \rceil\\
  s_n &=\sigma^f(s'_n).
\end{aligned}
\end{equation}
Then each orbit in $X_\alpha$ is projected onto a class of $\mathbb{T}_\alpha$. The mapping 
is smooth because so is $\mathrm{pr}_\alpha$.

But recall from Remark \ref{rem:cpsts} that 
$\Omega_\alpha$ contains an additional orbit corresponding to sequences differing in two consecutive letters. 
 By considering both of these orbits, we obtain a $\mathbb{Z}$-principal fiber bundle 
 $X_\alpha\to \mathbb T_\alpha$ with a singular point with a double fiber. We deal with this singularity 
 in the following way. 
 
Note that the equivalence classes $[x]$ of $\mathbb{T}^{2}/\mathcal{S}_\alpha$ correspond to orbits 
under the action by translation on $\Omega_\alpha$, so they 
are path connected and diffeomorphic to $\mathbb{R}$. 
We define the irrational tori with two origins $\mathbb{T}_{\alpha}^{\bullet}$ as the quotient 
of the punctured torus $\mathbb{T}^{2}-\{e\}$ by the equivalence relation $x\sim y$ 
if and only if $[x]=[y]$ and both $x$ and $y$ are in the same path connected component of $[x]-\{e\}$, 
for $e\in\mathcal{S}_\alpha$ the identity element of the group $\mathbb{T}^2$. 
The class $\mathcal{S}_\alpha$ is divided in two rays, which are diffeomorphic to the real line.
We will show next that this quotient map is actually a diffeological fibration. 


\smallskip

The inclusion $i:\mathbb{T}^{2}-\{e\}\to\mathbb{T}^{2}$ 
induces the following conmutative diagram of smooth maps:
 \begin{equation}
  \label{diag:fibrados}
\xymatrix{
\mathbb{T}^{2}-\{e\} \ar[r]^-{i} \ar[d]_\pi & \mathbb{T}^{2} \ar[d]^{\pi_{\alpha}}\\
\mathbb{T}_{\alpha}^{\bullet}\ar[r]^{i_\bullet}& \mathbb{T}_{\alpha},
}
\end{equation}
where $i_\bullet$ is surjective.
Let $P: V \to \mathbb T_\alpha^\bullet$ be a plot, so $i_\bullet \circ P=:Q$ is a plot of $\mathbb T_\alpha$. 
As $\mathbb T^2$ is a $\mathbb R$-principal bundle over $\mathbb T_\alpha$, by Lemma \ref{lemma:lt.a.plots}  
we know that, for every $v_0\in V$, there is an open subset $U\subset V$ 
such that $Q|_U^*(\mathbb T_\alpha)$ is a trivial bundle. Which means that there is a diffeomorphism $\eta$
$$\eta: U \times \mathbb R \to Q|_U^*(\mathbb T^2).$$ 

\smallskip
\noindent 
Also note that the inclusion $i:\mathbb{T}^{2}-\{e\}\to\mathbb{T}^{2}$ induces an embedding between the pullbacks
$$P|_U^*(\mathbb T^2-\{e\}) \hookrightarrow Q|_U^*(\mathbb T^2). $$

\smallskip
 Next we consider the map
   $$
  \begin{aligned}
    \mathrm{id} \times \exp: & U \times \mathbb R  \to  U\times \mathbb R \\
      & (v,t) \mapsto  (v, \exp(t) )
  \end{aligned}
$$
 and let $\bar\eta = \eta \circ (\mathrm{id} \times \exp): U \times \mathbb R \to Q|_U^*(\mathbb T^2)$.
Note that the image $\bar \eta(U\times \mathbb R)$ of $\bar \eta$   
does not contain the neutral element $e$ and thus 
 $$ \bar \eta(U\times\mathbb R) \subset P|_U^*(\mathbb T^2-\{e\}) \subset Q|_U^*(\mathbb T^2). $$

Now let $U\times J=U\times [a,b]\subset U \times \mathbb R$. 
We have that $ \bar \eta(U\times J) \cap \mathrm{pr}_1^{-1}(v_0)$,
where $\mathrm{pr}_1$ denotes the projection $\mathrm{pr}_1 : P|_U^*(\mathbb T^2) \subset U \times (\mathbb T^2-\{e\}) \to U$, 
is an interval, say $[a_{v_0}, b_{v_0}]$. This means that for every $v \in U$ and every interval $[a,b]$ 
there is an interval $[a_v, b_v]$  such that
$$ \bar \eta(U\times [a,b]) \cap \mathrm{pr}_1^{-1}(v) = [a_v, b_v].$$

\medskip
So we can exhibit sections $s_a$, $s_b : U \to P|_U^*(\mathbb T^2-\{e\})$
given by
$$ 
\begin{aligned}
   s_a(u) &=  \bar \eta(u, a),\\
   s_b(u) &= \bar \eta(u, b)
\end{aligned}
$$
which allow us to define linear diffeomorphisms $\gamma_u: \mathrm{pr}_1^{-1}(u) \to \mathbb R$, such that 
$\gamma_u(s_a(u))=0$ and $\gamma_u(s_b(u))=1$. Then there is a diffeomorphism


$$ 
\begin{aligned}
  \mathbf r:  P|_U^*(\mathbb T^2-\{e\}) & \to  U \times \mathbb R \\
  x & \mapsto ( u , \gamma_u(x) ),
\end{aligned}
$$
where $u=\mathrm{pr}_1(x)$.
Which means that $\pi:\mathbb T^2 - \{e\} \to \mathbb T_\alpha^\bullet$
 is locally trivial along the plots of $\mathbb T_\alpha^\bullet$ 
and, by Lemma \ref{lemma:lt.a.plots}, it is a diffeological fibration.



\smallskip

 Finally, using \ref{eq:conmushift} and \ref{diag:fibrados}, we define the 
$\mathbb{Z}$-principal fiber bundle $X_\alpha\to \mathbb{T}_{\alpha}^{\bullet}$ which can be extended 
through $\Psi$ in order to obtain the following conmutative diagram of smooth maps.

\begin{equation}
\xymatrix{
X_\alpha \ar[r]^-\Psi \ar[rd]_p & \Omega_\alpha \ar[d]^{\pi_{\alpha}^{\bullet}}\ar[r] & \mathbb{T}^{2} \ar[d]^{\pi_{\alpha}}\\
& \mathbb{T}_{\alpha}^{\bullet}\ar[r]^{i_\bullet} & \mathbb{T}_{\alpha}
}
\end{equation}

\smallbreak

We have seen already in Lemma \ref{lemma:lt.a.plots} and Section \ref{SS:irratorus} that the Kronecker flow 
$\mathcal K_\alpha$ is a $\mathbb R$-principal bundle over $\mathbb T_\alpha$ 
and that there is a bijection between $\Omega_\alpha - \{O(T_1)\}$ and $\mathcal K_\alpha$.
So there is only left to prove that 
the new action map $\mathcal A^\mathbb R_{\Omega_\alpha}$ is still an induction. 
But by construction, the additional fiber of $\Omega_\alpha$ is 
smoothly
projected onto the extra point of $\mathbb{T}_{\alpha}^{\bullet}$, 
so if $\mathcal A^\mathbb R_{\mathcal K_\alpha}$
is an induction then $\mathcal A^\mathbb R_{\Omega_\alpha}$ must also be. 
Thus the surjection $\pi_{\alpha}^{\bullet}$ is $\mathbb{R}$-principal fibration.

\end{proof}

\medskip


As a consequence, we are able to compute the fundamental group of $\Omega_\alpha$.

\medskip
\begin{cor}
  \label{cor:gfomega}
The fundamental group of $\Omega_\alpha$ is 
$$
\pi_1(\Omega_\alpha) = \mathbb{F}_2, 
$$
where $\mathbb{F}_2$ is the free group generated by two elements.
\end{cor}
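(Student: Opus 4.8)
The plan is to read off $\pi_1(\Omega_\alpha)$ by transporting it across two diffeological fibrations onto a once-punctured torus, whose fundamental group is classical. First I would invoke Theorem B, which exhibits $\pi_\alpha^\bullet : \Omega_\alpha \to \mathbb{T}_\alpha^\bullet$ as an $\mathbb{R}$-principal fibration, and feed it into the diffeological long exact sequence of homotopy groups (the same mechanism as the sequence (\ref{exactHomotopi:kronecker}), justified by \cite[Postulate 8.21]{PIZ13}):
\begin{equation*}
\cdots \longrightarrow \pi_1(\mathbb{R}) \longrightarrow \pi_1(\Omega_\alpha) \longrightarrow \pi_1(\mathbb{T}_\alpha^\bullet) \longrightarrow \pi_0(\mathbb{R}) \longrightarrow \cdots.
\end{equation*}
Since $\mathbb{R}$ is connected and simply connected, the terms $\pi_1(\mathbb{R})$ and $\pi_0(\mathbb{R})$ are trivial, and the sequence collapses to an isomorphism $\pi_1(\Omega_\alpha) \cong \pi_1(\mathbb{T}_\alpha^\bullet)$. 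This reduces the computation to the fundamental group of the irrational torus with two origins.

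For the second step I would reuse the fibration already produced inside the proof of Theorem B, namely $\pi : \mathbb{T}^2 - \{e\} \to \mathbb{T}_\alpha^\bullet$, shown there to be a diffeological fibration via Lemma \ref{lemma:lt.a.plots} and the diagram (\ref{diag:fibrados}). The essential bookkeeping is to check that every fiber is connected and diffeomorphic to $\mathbb{R}$: a class disjoint from $\mathcal{S}_\alpha$ is a full Kronecker orbit, hence $\cong \mathbb{R}$, while the orbit through the removed point $e$ splits into its two rays, each an open half-line carrying the subspace diffeology of $\mathbb{R}$ and mapping onto one of the two origins — this is precisely the splitting recorded in Remark \ref{rem:cpsts}. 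With fiber $\mathbb{R}$ once more, the identical long exact sequence argument yields $\pi_1(\mathbb{T}^2 - \{e\}) \cong \pi_1(\mathbb{T}_\alpha^\bullet)$.

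Finally I would compute $\pi_1(\mathbb{T}^2 - \{e\})$ classically: the once-punctured torus is a smooth surface that deformation retracts onto a wedge of two circles, so its fundamental group is the free group $\mathbb{F}_2$. Because $\mathbb{T}^2 - \{e\}$ is an ordinary manifold, its $D$-topology is the usual one and its diffeological homotopy groups coincide with the topological ones, so $\pi_1(\mathbb{T}^2 - \{e\}) = \mathbb{F}_2$ holds in the diffeological sense as well. Chaining the two isomorphisms then gives $\pi_1(\Omega_\alpha) \cong \pi_1(\mathbb{T}_\alpha^\bullet) \cong \pi_1(\mathbb{T}^2 - \{e\}) = \mathbb{F}_2$, as claimed. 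As a consistency check, the abelianization $\mathbb{Z}^2$ of $\mathbb{F}_2$ recovers $\pi_1(\mathbb{T}_\alpha)$ under the map induced by $i_\bullet : \mathbb{T}_\alpha^\bullet \to \mathbb{T}_\alpha$.

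The main obstacle I anticipate is not the exact-sequence formalism, which is routine once both maps are known to be diffeological fibrations, but rather the verification that the punctured-torus fibration has connected $\mathbb{R}$-fibers over the two origins — that is, that excising $e$ genuinely separates $\mathcal{S}_\alpha$ into two half-lines, each diffeomorphic to $\mathbb{R}$, so that $\pi_0$ of the fiber is trivial even at the doubled points. A secondary point deserving an explicit remark is the agreement of diffeological and topological $\pi_1$ for the manifold $\mathbb{T}^2 - \{e\}$, since the entire computation rests on identifying $\pi_1(\mathbb{T}^2 - \{e\})$ with $\mathbb{F}_2$.
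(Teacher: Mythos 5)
Your proposal is correct and follows essentially the same route as the paper: both chain the long exact homotopy sequences of the two $\mathbb{R}$-fibrations $\Omega_\alpha \to \mathbb{T}_\alpha^\bullet$ (Theorem B) and $\mathbb{T}^2 - \{e\} \to \mathbb{T}_\alpha^\bullet$ (from the proof of Theorem B) to identify $\pi_1(\Omega_\alpha) \cong \pi_1(\mathbb{T}_\alpha^\bullet) \cong \pi_1(\mathbb{T}^2-\{e\}) = \mathbb{F}_2$. Your added care about the fibers over the two origins and about the agreement of diffeological and topological $\pi_1$ for the punctured torus only makes explicit what the paper leaves implicit.
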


\begin{proof}
The proof relies on the exact sequence of homotopy \cite[Postulate 8.21]{PIZ13} of the diffeological fibration $\pi:\mathbb{T}^{2}-\{e\}\to \mathbb{T}_{\alpha}^{\bullet}$, 
$$
\ldots \longrightarrow \pi_2(\mathbb{T}_{\alpha}^{\bullet})\longrightarrow \pi_1(\mathbb R) \longrightarrow \pi_1(\mathbb{T}^{2}-\{e\}) 
\longrightarrow \pi_1(\mathbb{T}^{\bullet}_\alpha ) \longrightarrow \pi_0(\mathbb{R}) \longrightarrow \pi_0(\mathbb{T}^{2}-\{e\})  $$ 

\noindent
As $\mathbb{R}$ is simply connected, we have that the fundamental groups $\pi_1(\mathbb{T}^{\bullet}_\alpha ),\pi_1(\mathbb{T}^{2}-\{e\})$ are isomorphic.
Note that $\Omega_\alpha$ is $\mathbb{R}$-principal bundle on $\mathbb{T}^{\bullet}_\alpha$, thus applying the exact sequence of homotopy we obtain that 
$$
\pi_1(\Omega_\alpha) = \pi_1(\mathbb{T}^{\bullet}_\alpha) =  \pi_1(\mathbb{T}^{2}-\{e\})=\mathbb{F}_2.
$$
\end{proof}

\smallskip

Theorem B makes it possible to bring the concept of strong orbit equivalence efectivelly into 
the context of tiling spaces. Note that 
the identification $i_\bullet:\mathbb{T}^{\bullet}_\alpha\to\mathbb{T}_\alpha$ induces a surjective morphism 
$\hat{i}_\bullet$ between $\mathbb{R}$-principal fiber bundles $\pi_{\alpha}^\bullet:\Omega_\alpha\to\mathbb{T}^{\bullet}_\alpha$ 
and $\pi_{\alpha}:\mathcal{K}_\alpha\to\mathbb{T}_\alpha$, besides the smooth map 
 $\hat{i}_\bullet\circ\Psi_\alpha:X_\alpha\to\mathcal{K}_\alpha$. 
  
 If $X_\alpha$ and $X_\beta$ are strong orbit equivalent, there is a cocycle map $n:X_\alpha\to\mathbb{Z}$ and a diffeomorphism $h:X_\alpha\to X_\beta$ such that 
  $h(\sigma(S))=\sigma^{n(S)}(h(S))$. 
  Then it is possible lift $h$ through the smooth maps $\hat{i}_\bullet,\Psi_\alpha$ and $\Psi_\beta$ so as to obtain the following commutative diagram.
  $$
  \xymatrix{
    X_{\alpha}\ar@{-->}[drrr] \ar[rr]^{\Psi_\alpha} \ar[dr] \ar[dd]_h & &
    \Omega_{\alpha}\ar[dl] \ar[dd] \ar[dr]^{i_\bullet \circ \pi^\bullet_\alpha} \\
    & \mathcal{K}_{\alpha}\ar[dd] \ar[rr] & & 
    \mathbb{T}_{\alpha} \ar[dd] \\
    X_{\beta}\ar@{-->}[drrr]  \ar[rr] \ar[dr]_{\hat{i}_\bullet\circ\Psi_\beta} & & \Omega_{\beta} \ar[dl]\ar[dr] \\ 
    & \mathcal{K}_{\beta} \ar[rr]_{\pi_\beta} & & \mathbb{T}_{\beta}
  }
  $$
The lift $\tilde{h}:\Omega_\alpha\to\Omega_\beta$ is more than an orbital equivalence. To see this, we define 
another cocycle $m:\Omega_\alpha\times\mathbb{R}\to \mathbb{Z}$ such that $\sigma^{m(T,x)}(\Phi_\alpha(T))=\Phi_\alpha(T+x)$. Then we have the following relation
\begin{equation}
\tilde{h}(T+x)=\tilde{h}(T)+\sum_{i=0}^{m(T,x)}\sum_{j=0}^{n(\sigma^i(S))}\delta_\beta(\sigma^{j}(h(S)))+d(\tilde{h}(T),\Psi_\beta(h(S))),
\end{equation} 
where $S=\Phi_\alpha(T)$, $d$ is the distance function of the metric of $\Omega_\beta$, 
and $\delta_\beta$ as defined in \ref{fnct:delta}.

\medskip

Then we define this stronger equivalence as follows and summarize the results in another theorem. 

\bigskip

\begin{definition}
  {\upshape 
  Two tiling spaces $\Omega_\alpha$, $\Omega_\beta$ are \emph{strong orbit equivalent}
  if their associated sturmian spaces $(X_\alpha,\sigma), (X_\beta,\sigma)$ are. 
  }
\end{definition}

\bigskip

\begin{theorem-c}
  Let $\Omega_\alpha$, $\Omega_\beta$ tiling spaces satisfying the conditions of Theorem B. Then the 
  following conditions are equivalent:
  \item[i)] $\Omega_\alpha$ and $\Omega_\beta$ are strong orbit equivalent.
  \item[ii)]$(X_\alpha,\sigma), (X_\beta,\sigma)$ are strong orbit equivalent.
  \item[iii)] $\mathbb T_\alpha$ and $\mathbb T_\beta$ are diffeomorphic.
  \item[iv)] $\alpha$ and $\beta$ are conjugated under the modular group $\mathrm{GL}(2,\mathbb{Z})$.

\end{theorem-c}

\begin{proof}
The equivalence $i)\Leftrightarrow ii)$ is by definition, $ii)\Leftrightarrow iv)$ is Lemma \ref{Lemma:EquivSturmian},
 and $iii)\Leftrightarrow iv)$ is Lemma \ref{lemma:IrracionalToriEquivalence}.

\end{proof}

\smallskip

Our results provide a classification of one-dimensional tiling spaces as conjugacy classes of irrational numbers
 under the modular group $\mathrm{GL}(2,\mathbb Z)$, 
which implies what we have called \emph{strong orbit equivalence} for tiling spaces. 
This turns out to be an alternative approach to some old results about 
substitution sturmian sequences \cite[Chapter 6]{fogg-arnoux}. 
In 
the present context they are stated as 

\medskip

\begin{cor}
  \label{cor:whithintheclass}
  Within the class of strong orbit equivalent one-dimensional tiling spaces associated 
  to a quadratic irrational slope $\alpha$ 
  there is a substitution tiling space.   

\end{cor}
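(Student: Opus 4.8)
The plan is to use Theorem C to convert this into a purely arithmetic statement and then import the classical correspondence between quadratic irrationals and substitutive sturmian words. By Theorem C, the strong orbit equivalence class of $\Omega_\alpha$ consists exactly of the tiling spaces $\Omega_\beta$ whose slope $\beta$ lies in the $\mathrm{GL}(2,\mathbb Z)$-orbit of $\alpha$. Hence it suffices to exhibit a single $\beta$ in this orbit for which $\Omega_\beta$ is a substitution tiling space: the implication $iv)\Rightarrow i)$ of Theorem C (equivalently Lemma \ref{Lemma:EquivSturmian} together with Lemma \ref{lemma:IrracionalToriEquivalence}) will then place $\Omega_\beta$ in the same class as $\Omega_\alpha$.

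First I would select a convenient representative of the orbit. Since $\alpha$ is a quadratic irrational, Lagrange's theorem gives that its continued fraction expansion is eventually periodic, while Serret's theorem says that two irrationals are $\mathrm{GL}(2,\mathbb Z)$-equivalent (in the Möbius sense of Definition \ref{def:equivIrrat}) precisely when their expansions have a common tail. Combining this with Galois's characterization of reduced quadratic irrationals, I can choose $\beta$ in the orbit of $\alpha$ whose continued fraction $\beta=[\overline{a_1,\dots,a_k}]$ is \emph{purely} periodic, doubling the period if necessary so that $k$ is even and the associated composition of elementary substitutions genuinely fixes a letter rather than merely conjugating the two generators.

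The combinatorial heart of the argument is then the content imported from \cite[Chapter 6]{fogg-arnoux}: a purely periodic expansion realizes the characteristic sturmian word $S_\beta$ of slope $\beta$ as the fixed point of the substitution $\omega$ obtained by composing, in the alternating pattern prescribed by the partial quotients $a_1,\dots,a_k$, the two elementary sturmian substitutions $(0\mapsto 0,\,1\mapsto 01)$ and $(0\mapsto 10,\,1\mapsto 1)$. Because every $a_i\geq 1$, the substitution $\omega$ is primitive, so by Proposition \ref{repLI} and the discussion following it the subshift it generates is minimal and coincides with the sturmian space $X_\beta$. Transporting this subshift along the section $\Psi$ and the forgetful map $\Phi$ of Section \ref{section:diff-sturm} identifies it with $\Omega_\beta$, so that $\Omega_\beta=\Omega_\omega$ is a substitution tiling space lying, by the reduction above, in the strong orbit equivalence class of $\Omega_\alpha$.

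The hard part will be the middle step: producing within the $\mathrm{GL}(2,\mathbb Z)$-orbit a slope with a purely periodic expansion and then writing down the explicit primitive substitution fixing the corresponding sturmian word. The first half is standard number theory (Lagrange--Galois--Serret), while the second is exactly the classical substitutive description of quadratic sturmian sequences; the remaining work consists chiefly in verifying that $\omega$ is primitive, that its subshift is the \emph{full} space $X_\beta$ rather than a single orbit, and that $\Psi$ carries this subshift onto $\Omega_\beta$, so that Proposition \ref{repLI} and Theorem C may both be applied.
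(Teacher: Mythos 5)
Your proposal is correct and takes essentially the same route as the paper: both reduce the statement, via Theorem C, to finding a $\mathrm{GL}(2,\mathbb{Z})$-equivalent slope whose continued fraction is purely periodic (the Lagrange--Galois--Serret chain, with the paper's ``periodic tail'' playing the role of your $\beta=[\overline{a_1,\dots,a_k}]$), and then invoke the substitutive description of such sturmian slopes from \cite{fogg-arnoux}. The only difference is one of explicitness: the paper cites the quadratic-Pisot criterion for cut-and-project tilings admitting substitutions, whereas you build the substitution concretely as a composition of the two elementary sturmian substitutions and flag the remaining verifications (primitivity, fullness of the subshift, transport along $\Psi$) yourself.
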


\begin{proof}

It is known from \cite{fogg-arnoux, masakova2000substitution, harriss06} that a 
cut-and-project tiling admit a substitution if the irrational slope $\alpha$
is a quadratic Pisot number. Moreover, a theorem by Galois \cite{Galois} asserts that quadratic Pisot numbers have purely periodic 
continued fraction. This, together with the well known fact that the continued fraction of conjugates under the modular group differ in 
a finite number of coeficients, implies that there is a Pisot number within this class of conjugation, provided the irrational 
number $\alpha$ is quadratic. This Pisot number is expresed as the continued fraction of the periodic tail of $\alpha$.

\end{proof}



\bmhead{Acknowledgments}

We thank Ana Rechtman, Alberto Verjovsky, \'Angel Zald\'ivar, Jos\'e Aliste-Prieto, 
 Andr\'es Navas, Fabien Durand and Bernardo Villarreal for their support and advice 
 during different stages of this research. 

\backmatter



\end{document}